\newcommand{\ncr}[2]{\left({#1 \atop #2}\right)}
\newcommand{\twocase}[5]{#1 \begin{cases} #2 & \text{{\rm #3}}\\ #4
&\text{{\rm #5}} \end{cases}  }
\newcommand{\intii}{\int_{-\infty}^\infty}
\newtheorem{thm}{Theorem}[section]
\newtheorem{cor}[thm]{Corollary}
\newtheorem{lem}[thm]{Lemma}
\newtheorem{defi}[thm]{Definition}
\numberwithin{equation}{section}
\newcommand\be{\begin{equation}}
\newcommand\ee{\end{equation}}
\newcommand\bea{\begin{eqnarray}}
\newcommand\eea{\end{eqnarray}}
\newcommand\bi{\begin{itemize}}
\newcommand\ei{\end{itemize}}
\newcommand\ben{\begin{enumerate}}
\newcommand\een{\end{enumerate}}
\renewcommand{\geq}{\geqslant}
\renewcommand{\leq}{\leqslant}
\renewcommand{\mod}[1]{
	{\ifmmode\text{\rm\ (mod~$#1$)}\else
		\discretionary{}{}{\hbox{ }}\rm(mod~$#1$)\fi}
}
\newcommand{\R}{\ensuremath{\mathbb{R}}}
\newcommand{\N}{\ensuremath{\mathbb{N}}}
\newcommand{\E}{\ensuremath{\mathbb{E}}}
\newcommand{\w}{\ensuremath{\mathcal{W}}}
\newcommand{\muwd}{\mu_{\ensuremath{\mathcal{W}}_d}}
\newcommand{\capp}{\textsc{capp}}
\newcommand{\capps}{\textsc{capp}s}
\begin{document}

\title{On the spectral distribution of large weighted random regular graphs}

\author{Leo Goldmakher}
\address{
Department of Mathematics,
University of Toronto,
Toronto, ON,
Canada}
\email{\textcolor{blue}{\href{mailto:lgoldmak@math.toronto.edu}{lgoldmak@math.toronto.edu}}}

\author{Cap Khoury}
\address{
Department of Mathematics,
University of Michigan,
Ann Arbor, MI,
USA
}
\email{\textcolor{blue}{\href{mailto:prof.cap.khoury@gmail.com}{prof.cap.khoury@gmail.com}}}

\author{Steven J. Miller}
\address{
Department of Mathematics and Statistics,
Williams College,
Williamstown, MA,
USA
}
\email{\textcolor{blue}{\href{mailto:sjm1@williams.edu, Steven.Miller.MC.96@aya.yale.edu}{sjm1@williams.edu, Steven.Miller.MC.96@aya.yale.edu}}}

\author{Kesinee Ninsuwan}
\address{
Institute for Computational and Mathematical Engineering,
Stanford University,
Stanford, CA,
USA}
\email{\textcolor{blue}{\href{mailto:eveve@stanford.edu}{eveve@stanford.edu}}}

\begin{abstract}
McKay proved that the limiting spectral measures of the ensembles of $d$-regular graphs with $N$ vertices converge to Kesten's measure as $N\to\infty$. In this paper we explore the case of weighted graphs. More precisely, given a large $d$-regular graph we assign random weights, drawn from some distribution $\mathcal{W}$, to its edges. We study the relationship between $\mathcal{W}$ and the associated limiting spectral distribution obtained by averaging over the weighted graphs. Among other results, we establish the existence of a unique `eigendistribution', i.e., a weight distribution $\mathcal{W}$ such that the associated limiting spectral distribution is a rescaling of $\mathcal{W}$. Initial investigations suggested that the eigendistribution was the semi-circle distribution, which by Wigner's Law is the limiting spectral measure for real symmetric matrices. We prove this is not the case, though the deviation between the eigendistribution and the semi-circular density is small (the first seven moments agree, and the difference in each higher moment is $O(1/d^2)$). Our analysis uses combinatorial results about closed acyclic walks in large trees, which may be of independent interest.

\end{abstract}

\subjclass[2010]{15B52, 05C80, 60F05 (primary), 05C22, 	05C38 (secondary).}

\keywords{Random graphs, spectral distribution, weighted graphs, regular graphs}

\date{}

\thanks{Portions of this work were completed at REUs at AIM and Williams College; we thank our colleagues there for helpful conversations on earlier drafts. The first named author was partially supported by an NSERC Discovery Grant. The third named author was partially supported by NSF grants DMS0600848 and DMS0970067. The fourth named author was partially supported by NSF grant DMS0850577, Brown University and Williams College.}

\maketitle

\thispagestyle{empty}

\tableofcontents



\section{Introduction}

The eigenvalues of the adjacency matrices associated to graphs encode a wealth of information about the original graph, and are thus a natural and important object to study and understand. We consider $d$-regular graphs below. Thus $d$ is always an eigenvalue of the adjacency matrix; moreover, it is the largest eigenvalue in absolute value.  The simplest application of the eigenvalues is to determine whether or not a graph is connected, which happens if and only if $d$ is a simple eigenvalue. Our next application depends on the difference between the second largest (in absolute value) eigenvalue and $d$; this is called the spectral gap. A large spectral gap implies many desirable properties for the graph. Such graphs are well-connected, which means that we can have a graph with very few edges but all vertices able to communicate with each other very quickly. These graphs arise in communication network theory, allowing the construction of superconcentrators and non-blocking networks \cite{Bien, Pi}, and in coding theory \cite{SS} and cryptography \cite{GILVZ}. Alon \cite{Al} conjectured that as $N\to\infty$, for $d\ge 3$ and any $\epsilon > 0$, ``most'' $d$-regular graphs on $N$ vertices have their second largest (in absolute value) eigenvalue at most $2 \sqrt{d-1} + \epsilon$; it is known that the $2\sqrt{d-1}$ cannot be improved upon. Thanks to the work of Friedman \cite{Fr1, Fr2, Fr3} this is now a theorem, though the finer behavior around this critical threshold is still open (see \cite{MNS} for numerics and conjectures). For some basics of graph theory and constructions of families of expanders (graphs with a large spectral gap and thus good connectivity properties), see \cite{DSV, LPS, Mar, Sar1, Sar2}.

After investigating the largest two eigenvalue and their consequences, it is natural to continue and study the rest of the spectrum. Thirty years ago, McKay \cite{McK} investigated the distribution of eigenvalues of large, random $d$-regular graphs; \textbf{\emph{we always assume our graphs do not contain any self-loops or multiple edges}}. Under the assumption that the number of cycles is small relative to the size of the graph (which is true for most $d$-regular graphs as the number of vertices grows), he proved the existence of a limiting spectral distribution $\nu_d$ depending only on $d$, and gave an explicit formula for $\nu_d$. Moreover, he showed that if one renormalizes $\nu_d$ so that its associated density function has support $[-1,1]$, then the sequence of renormalized measures converges to Wigner's semicircle measure as $d \to \infty$. The goal of the present paper is to explore the more complicated situation for randomly weighted graphs. We weigh the graphs by attaching weights to each edge. There is an extensive literature on properties of weighted graphs (where we may weight either the edges or the graphs in the family); see \cite{ALHM, AL, Bo1, Bo2, ES, Ga, McD1, McD2, Po}
and the references therein for some results and applications.

More precisely, suppose $\w$ is a random variable with finite moments on $\R$ and density $p_\w$, and $G \in \mathcal{R}_{N,d}$, the set of simple $d$-regular graphs on $N$ vertices with no self-loops. We weigh each edge by independent identically distributed random variables (iidrv's) drawn from $\w$. In other words, we replace all nonzero entries in the adjacency matrix of $G$ by iidrv's drawn from $\w$; this is the same as taking the Hadamard product of a real symmetric weight matrix with the graph's adjacency matrix. Denote the spectrum of the weighted graph $G$ by
$\{\lambda_1 \leq \lambda_2 \leq \cdots \leq \lambda_N\}$, and consider the uniform measure $\nu_{d,G,\w}$ on this spectrum:
\begin{equation}
\label{eq:SpectralDistr}
\nu_{d,G,\w}(x) \ = \  \frac{1}{N} \#\big\{j \leq N : \lambda_j = x\big\}.
\end{equation}
As indicated by the subscripts, this measure depends on $d$, $G$, and $\w$. We are interested in the limiting behavior, so rather than focusing on any particular graph $G$ we take a sequence of graphs of increasing size. We first set some notation.\\

\begin{itemize}

\item $\mathcal{R}_{N,d}$: The set of simple $d$-regular graphs on $N$ vertices without self-loops.

\item $|G|$, $a_{ij}$: For a graph $G$, $|G|$ denotes the number of its vertices, and $a_{ij} = 1$ if vertices $i$ and $j$ are connected by an edge of $G$, and 0 otherwise.

\item $n_{{\rm cyl}}(k;G)$: The number of cycles of length $k$ in $G$.

\item $c_m$: We set $c_m$ to be the $m$\textsuperscript{th} moment of the semi-circle distribution, normalized to have variance 1/4. Thus $c_{2k+1} = 0$ and $c_{2k} = \frac{1}{4^k(k+1)} \ncr{2k}{k}$ (with $\frac1{k+1}\ncr{2k}{k}$ the $k$\textsuperscript{th} Catalan number).

\item $\mu_\mathcal{X}(k)$, $p_{\mathcal{X}}$, $\textbf{x}$: For $\mathcal{X}$ a random variable whose density has finite moments, $\mu_\mathcal{X}(k)$ is its $k$\textsuperscript{th} moment and $p_{\mathcal{X}}$ is the density associated to $\mathcal{X}$. Finally, $\textbf{x}$ is either an $N(N+1)/2$ vector (or, equivalently, an $N\times N$ real symmetric matrix) of independent random variables $x_{ij}$ drawn from $X$. We typically take $X$ to be our weight random variable $\w$.

\item $G_{\textbf{w}}$, $\mu_{d,\w}(k;G), \mu_{d,\w}(k)$: For a fixed $d$, weight distribution $\mathcal{W}$ and graph $G$, $G_{\textbf{w}}$ denotes the graph obtained by weighting the edges of $G$ by $\textbf{w}$, $\mu_{d,\w}(k;G)$ is the average (over weights drawn from $\w$) $k$\textsuperscript{th} moment of the associated spectral measures $\nu_{d,G_{\textbf{w}}}$, while $\mu_{d,\w}(k)$ is the average of $\mu_{d,\w}(k;G)$ over $G \in \mathcal{R}_{N,d}$.

\end{itemize}

\ \\

The following result is the starting point of our investigations. The unweighted case is due to McKay \cite{McK}; the existence proof in the general case proceeds similarly.

\begin{thm}
\label{thm:LimitDistr}
For any sequence of $d$-regular graphs $\{G_i\}$ such that $|G_i| \to \infty$ and
$n_{{\rm cyl}}(G_i) = o(|G_i|)$ for every $k \geq 3$, the limiting distribution
\be
\nu_{d,\w}(x)\ :=\ \lim_{i \to \infty} \nu_{d,G_i,\w}(x)
\ee
exists and depends only on $d$ and $\w$. In the unweighted case (i.e., each weight is 1) the density is
given by Kesten's measure: \be \twocase{f(x) \ = \ }{\frac{d}{2\pi(d^2-x^2)} \sqrt{4(d-1) - x^2},}{$|x| \le
2\sqrt{d-1}$}{0}{otherwise.} \ee Note that as $d\to\infty$, Kesten's measure tends to the semi-circle distribution.
\end{thm}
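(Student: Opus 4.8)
The plan is to use the method of moments. The key observation is that the $k$\textsuperscript{th} moment of the spectral measure $\nu_{d,G_{\mathbf{w}}}$ equals $\frac{1}{N}\mathrm{Tr}(A_{\mathbf{w}}^k)$, where $A_{\mathbf{w}}$ is the weighted adjacency matrix; expanding the trace, this is $\frac1N \sum_{i_1,\dots,i_k} w_{i_1 i_2} w_{i_2 i_3}\cdots w_{i_k i_1} a_{i_1 i_2}\cdots a_{i_k i_1}$, a sum over closed walks of length $k$ in $G$. First I would average over the weights: since the $w_{ij}$ are i.i.d.\ with finite moments, a closed walk contributes (in expectation) $\prod_e \mu_{\w}(m_e)$, where $m_e$ is the number of times the walk traverses edge $e$ (with $m_e\ge 1$), so only walks that traverse every edge they use at least twice survive with a nonzero contribution coming from the ``extra'' multiplicities; in particular every edge used an odd number of times still contributes, but crucially a walk using $\ell$ distinct edges has a nonzero expected weight always, and the combinatorics that follow are about how many vertices such a walk can visit.

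Second, I would count closed walks of length $k$ in a $d$-regular graph, classifying them by the number of distinct vertices they visit. The standard fact (this is exactly McKay's argument) is that a closed walk of length $k$ visiting $v$ distinct vertices must contain a cycle unless $v\le k/2 + 1$ with the walk tracing out a tree; walks that traverse a nontrivial cycle are controlled by the hypothesis $n_{\mathrm{cyl}}(k;G_i) = o(|G_i|)$, hence contribute $o(N)$ to $\mathrm{Tr}(A_{\mathbf{w}}^k)$ and wash out after dividing by $N$. The dominant term comes from walks supported on a tree: for each vertex $i$, the number of closed walks of length $k$ from $i$ that stay within the tree neighborhood is, for $N$ large and $G_i$ locally tree-like, exactly the number of closed walks of length $k$ based at the root of the infinite $d$-regular tree $\mathbb{T}_d$. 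This count is a purely local, $N$-independent quantity, call it $T_d(k)$, and since on a tree every edge of the walk is traversed an even number of times, each such walk contributes a product of even weight-moments $\mu_{\w}(2),\mu_{\w}(4),\dots$; one then checks these contributions organize into an $N$-independent number $\mu_{d,\w}(k)$. Thus $\frac1N \mathrm{Tr}(A_{\mathbf{w}}^k) \to \mu_{d,\w}(k)$ for every $k$, depending only on $d$ and $\w$.

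Third, I would invoke the method of moments to conclude convergence of the measures. The moments $\mu_{d,\w}(k)$ grow slowly enough (they are bounded by $d^k$ times a constant depending on the weight moments, which satisfies Carleman's condition since $\w$ has, e.g., a density with finite moments — and for the existence statement one may as well assume the moment problem is determinate), so the limiting moment sequence corresponds to a unique measure $\nu_{d,\w}$, and $\nu_{d,G_i,\w} \Rightarrow \nu_{d,\w}$. Finally, for the unweighted case $\w\equiv 1$, every weight-moment is $1$, so $\mu_{d,1}(k) = T_d(k)$ is literally the number of closed walks of length $k$ based at the root of $\mathbb{T}_d$; these are classical (they are the moments of Kesten's measure), and a generating-function or direct combinatorial computation identifies the measure with density $\frac{d}{2\pi(d^2-x^2)}\sqrt{4(d-1)-x^2}$ on $|x|\le 2\sqrt{d-1}$, with the $d\to\infty$ limit being the semicircle by the usual Catalan-number asymptotics.

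The main obstacle is the second step: making precise the claim that the number of closed length-$k$ walks from a typical vertex equals the infinite-tree count up to an error that vanishes after averaging. This requires carefully separating walks by their ``excess'' (edges minus vertices plus one of the subgraph they trace), showing that positive-excess walks are in bijection with (or injected into) short cycles and are therefore $o(N)$ in total by the hypothesis, while zero-excess (tree) walks from vertices whose $\lceil k/2\rceil$-neighborhood is a tree are counted exactly by $T_d(k)$ and the exceptional vertices number $o(N)$. Keeping the bookkeeping uniform in $i$ and tracking the weight-moment factors correctly through this classification is the technical heart of the argument; everything else is bookkeeping and the standard moment-convergence machinery.
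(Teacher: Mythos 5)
Your proposal is correct and follows essentially the same route as the paper, which itself only sketches this theorem by deferring to McKay's argument: the method of moments via $\frac1N\mathrm{Tr}(A_{\mathbf w}^k)$, averaging over the weights, discarding walks that contain a cycle using the hypothesis $n_{\rm cyl}(k;G_i)=o(|G_i|)$ together with bounds on expected cycle counts, and identifying the surviving locally-tree-like contribution (Catalan/Kesten in the unweighted case). The detailed version of exactly this decomposition appears in the paper's proof of Lemma \ref{lem:MomentsIdentity}, so your plan matches the intended proof.
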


The difficulty is deriving good, closed-form expressions when the weights are non-trivial. To this end, we study the one-parameter family of maps
\begin{equation}
\label{eq:DefnOfTd}
T_d : \w \longmapsto \nu_{d,\w}.
\end{equation}
To understand the behavior of $T_d$, we investigate its eigendistributions, a concept we now explain. Recall that any measure $\nu$ can be rescaled by a real $\lambda > 0$ to form a new measure $\nu^{(\lambda)}$ by setting
\be\label{eq:effectoflambdaonmeasure}
\nu^{(\lambda)}(A) \ := \  \nu(\lambda A) \ \ \ \ \ \ ({\rm for\ all}\ A \subseteq{\R}).
\ee
If a distribution $\w$ satisfies
\be
T_d \w = \w^{(\lambda)}
\ee
for some $\lambda > 0$, we say $\w$ is an eigendistribution of $T_d$ with eigenvalue $\lambda$. We prove in \S\ref{sec:eigendistr} that for each $d$ the map $T_d$ has a unique eigendistribution, up to rescaling; this existence proof is a straightforward application of standard techniques.

Thus the natural question is to determine the eigendistribution for each $d$. Explicit formulas exist for the moments, but quickly become very involved. Brute force computations show that the first seven moments of the eigendistribution agree with the moments of a semi-circular distribution, suggesting that the semi-circle is the answer. If true this is quite interesting, as the semi-circle is the limiting spectral measure for real symmetric matrices (Wigner's law); moreover, as $d\to\infty$ the limiting spectral measure of the unweighted ensemble of $d$-regular graphs converges to the semi-circle. In fact, the motivation for this research was the following question: What weights must be introduced so that the weighted ensemble has the semi-circle as its density?

\begin{figure}
\begin{center}
\scalebox{.837}{\includegraphics{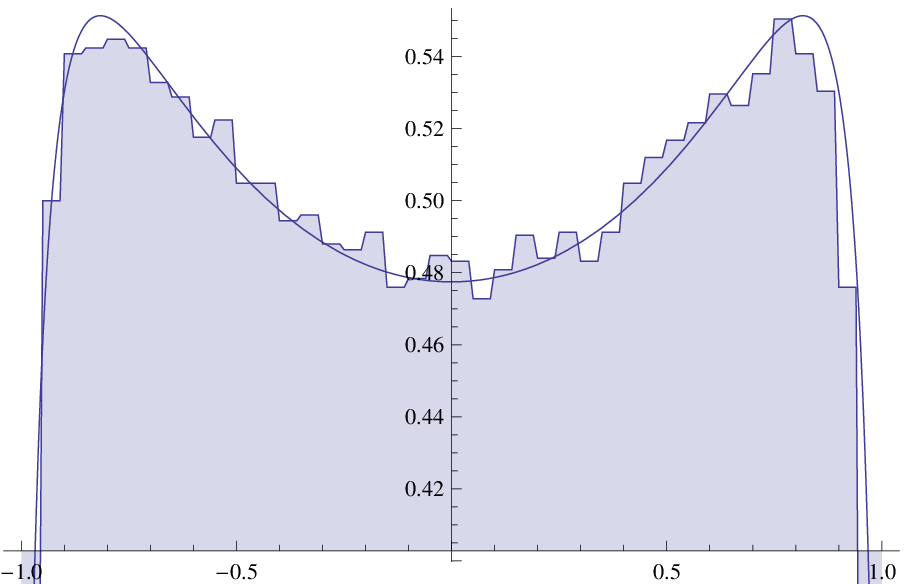}}\ \scalebox{.837}{\includegraphics{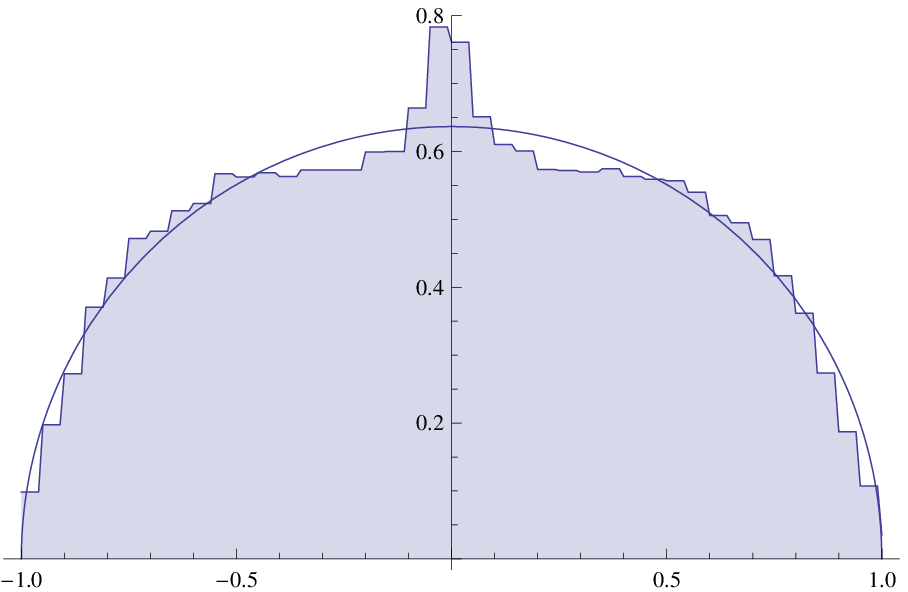}}
\caption{\label{fig:semicircleevidence} Numerical evidence `supporting' the semi-circular conjecture; here we have 100 matrices that are 4-regular and $200 \times 200$. The left plot is the density of eigenvalues in the unweighted case compared to Kesten's measure, while the right plot compares the density of eigenvalues with semi-circular weights to the semi-circular distribution.}
\end{center}\end{figure}

While a determination of the first few moments and numerical investigations (see Figure \ref{fig:semicircleevidence}) seemed to support the semi-circle as the eigendistribution, this conjecture is false, though the two distributions are close and agree as $d\to\infty$. For another ensemble where numerical data and heuristic arguments suggested a specific limiting spectral measure which was close to but not equal to the answer, see the work on real symmetric Toeplitz matrices \cite{BDJ,HM}.

To state our results precisely, we switch to the language of moments. In \S\ref{sec:combprelim} we define our notation, which we use to give a precise relationship between the moments of $\w$ and $T_d \w$ in terms of \emph{closed acyclic path patterns}, a combinatorial notion we develop in \S\ref{sect:capp}. From this we deduce our main result.

\begin{thm}\label{thm:mainresultuniquewdmoments} There is a unique eigendistribution of $T_d$ which has second moment equal to $1/4$, which we denote $\w_d$. Let $\mu_{\w_d}(k)$ denote the $k$\textsuperscript{${\rm th}$} moment of $\w_d$. Then for all $k \in \N$ we have $\mu_{\w_d}(2k+1) = 0$ and
\be
\mu_{\w_d}(2k)\ = \ c_{2k} + O\left(1/d^2\right),
\ee
where $c_m$ is the $m$\textsuperscript{${\rm th}$} moment of the semi-circle distribution normalized to have second moment 1/4.

We have $\mu_{\w_d}(2) = 1/4$, $\mu_{\w_d}(4) = 1/8$, $\mu_{\w_d}(6) = 5/64$ (all agreeing with the normalized semi-circular density), but
\be
\mu_{\w_d}(8) \ = \ \frac{7}{128} + \frac1{128(d^2+d+1)},
\ee
which disagrees with the eighth moment of the semi-circle, $7/128$.
\end{thm}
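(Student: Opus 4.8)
The plan is to reduce everything to a recursion for the moments $\mu_{\w_d}(2k)$ coming from the fixed-point equation $T_d\w_d = \w_d^{(\lambda)}$, and then to analyze that recursion both asymptotically (to get the $c_{2k}+O(1/d^2)$ statement) and exactly in low degrees (to get the explicit values of $\mu_{\w_d}(2),\dots,\mu_{\w_d}(8)$). First I would invoke the existence-and-uniqueness result from \S\ref{sec:eigendistr}, so that it suffices to work with a \emph{fixed} symmetric distribution normalized by $\mu_{\w_d}(2)=1/4$; the eigenvalue $\lambda$ is then determined by the normalization and need not be carried around explicitly. Symmetry of $\w_d$ (so that all odd moments vanish) should follow because $T_d$ maps symmetric distributions to symmetric distributions and the eigendistribution is unique, or directly from the combinatorial description: a closed walk in a tree that uses some edge an odd number of times contributes a vanishing expectation over a symmetric weight variable.

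The technical heart is the moment formula from \S\ref{sec:combprelim}--\S\ref{sect:capp}: the $2k$\textsuperscript{th} moment of $T_d\w$ is a finite sum over \emph{closed acyclic path patterns} of length $2k$, each term being a product of moments of $\w$ weighted by a combinatorial count that depends on $d$ (essentially the number of ways to realize the pattern as a closed walk in the $d$-regular tree). Writing this out gives, for each $k$, an equation of the shape
\be
\mu_{\w_d}(2k)\ =\ P_k\big(d;\,\mu_{\w_d}(2),\mu_{\w_d}(4),\dots,\mu_{\w_d}(2k)\big),
\ee
where $P_k$ is an explicit polynomial whose coefficients are rational functions of $d$. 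The term recording the ``fully pairing'' walks (each edge used exactly twice, i.e.\ the Catalan/Dyck structures) has leading behavior matching the semicircle recursion, and all other patterns --- those forcing some edge to be traversed $4$ or more times --- carry a combinatorial count that is smaller by a factor $O(1/d^2)$ relative to the main term, because pinning down extra coincidences among the steps of the walk costs inverse powers of the degree. I would make this precise with a lemma (the ``combinatorial results about closed acyclic walks in large trees'' advertised in the abstract) stating that the number of closed acyclic walks of length $2k$ realizing a pattern with $e$ distinct edges is $d(d-1)^{e-1}$ times a pattern-dependent constant, so patterns with $e<k$ are suppressed by $(d-1)^{-(k-e)}\le (d-1)^{-1}$, and a short parity/tree argument upgrades a single saved factor to $1/d^2$. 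Solving the recursion for $\mu_{\w_d}(2k)$ and expanding in $1/d$ then yields $\mu_{\w_d}(2k)=c_{2k}+O(1/d^2)$, with the implied constant depending only on $k$.

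For the explicit low-order values I would simply run the recursion $P_k$ by hand for $k=1,2,3,4$. The $k=1$ case is the normalization $\mu_{\w_d}(2)=1/4$. For $k=2,3$ every closed acyclic path pattern of length $4$ and $6$ happens to be a full pairing (there is no room in a tree for a length-$\le 6$ closed walk to revisit an edge four times without creating a cycle or shrinking the edge count below the threshold), so the $d$-dependent corrections vanish identically and one gets exactly $c_4=1/8$ and $c_6=5/64$. At $k=4$ the first genuinely new pattern appears --- a closed walk of length $8$ that traverses a single edge four times --- contributing a term proportional to $\mu_{\w_d}(4)$ (equivalently $\mu_{\w_d}(2)^2$) with combinatorial weight a rational function of $d$; after substituting the already-known lower moments and simplifying, the correction collapses to $1/\big(128(d^2+d+1)\big)$, giving $\mu_{\w_d}(8)=7/128+1/(128(d^2+d+1))$, which is $>7/128$ for every finite $d$ and hence $\ne c_8$.

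The main obstacle I anticipate is the bookkeeping in the combinatorial lemma: correctly enumerating closed acyclic path patterns of a given length, tracking which ones are full pairings versus higher-multiplicity, and proving the uniform $O(1/d^2)$ bound on the non-pairing contributions (in particular ruling out an $O(1/d)$ leakage, which is what makes the first \emph{seven} moments, and not merely the first five, agree with the semicircle). Everything after that lemma --- setting up the fixed-point recursion, extracting the asymptotics, and grinding out $k\le 4$ by hand --- is routine.
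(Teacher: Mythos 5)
Your overall architecture matches the paper's: derive the eigenmoment recursion from the fixed-point equation (Lemma \ref{lem:Eigenmoments}), get uniqueness and existence via the recursion plus Carleman's condition, identify the main term with the Catalan count of full-pairing patterns (Lemma \ref{lem:CountingWalks}), and grind out $k\le 4$ by hand. However, the step you yourself flag as the crux --- ruling out $O(1/d)$ leakage --- is where your proposed mechanism fails. You assert that patterns forcing some edge to be traversed four or more times ``carry a combinatorial count that is smaller by a factor $O(1/d^2)$ relative to the main term.'' This is false: a pattern in $\mathcal{P}^{(4)}_{2k}$ has $k-1$ distinct edges, so its multiplicity $m_\pi(d)=\prod_j(d-\alpha_j)$ has leading term $d^{k-1}$, only one power of $d$ below the main term $d^k$. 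No ``short parity/tree argument'' upgrades this to $d^{k-2}$; the individual suppression really is only $1/d$. The actual reason the error is $O(1/d^2)$ is an exact cancellation at order $d^{k-1}$ between two different sources: the subleading term $-|\mathcal{T}_{2k}|d^{k-1}$ in the expansion of $\sum_{\pi\in\mathcal{P}^{(2)}_{2k}}m_\pi(d)$ (where $\mathcal{T}_{2k}$ counts pairing patterns with a distinguished ordered pair of adjacent edges), and the leading term $|\mathcal{P}^{(4)}_{2k}|d^{k-1}$ from the signature-$(4,2,\dots,2)$ patterns, weighted by the ratio $\muwd(4)/\muwd(2)^2=2$. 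The cancellation holds precisely because of the two-to-one correspondence $|\mathcal{T}_{2k}|=2|\mathcal{P}^{(4)}_{2k}|$ (Lemma \ref{lem:Serendipity}), which is the paper's key combinatorial input and is entirely absent from your proposal.

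A second, related error: your explanation of why $\mu_{\w_d}(4)$ and $\mu_{\w_d}(6)$ agree with the semicircle --- that ``every closed acyclic path pattern of length $4$ and $6$ happens to be a full pairing'' --- is wrong. $\mathcal{P}_6$ contains six patterns of signature $(4,2)$ (e.g.\ $e_1e_1e_1e_1e_2e_2$ and $e_1e_2e_1e_1e_2e_1$), and their total contribution $6d(d-1)\mu_\w(4)\mu_\w(2)$ is essential: it is exactly what cancels the $d^2$ deficit $12d^2-12d$ left by the multiplicities $3d(d-1)^2+2d(d-1)(d-2)$ of the pairing patterns, yielding $5(d^3-d)/64$ on the nose. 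So the low-moment agreements are themselves instances of the same serendipitous cancellation, not vacuously true. Your explicit eighth-moment computation and final answer are consistent with the paper's \eqref{eq:averageeighthmoment}, but as written the proposal would only establish $\mu_{\w_d}(2k)=c_{2k}+O(1/d)$ in general; to close the gap you need to isolate the order-$d^{k-1}$ coefficient on both the $\mathcal{P}^{(2)}_{2k}$ and $\mathcal{P}^{(4)}_{2k}$ sides and prove the bijective identity that makes them cancel.
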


The difference in the eighth moments show that our error term is optimal. The fact that the error decays like $1/d^2$, as opposed to $1/d$, is the consequence of a beautiful combinatorial alignment which we describe in Lemma \ref{lem:Serendipity}.


In this paper we concentrate on deriving results about the eigendistribution $\w_d$ and not on the convergence of the individual weighted spectral measures to the average, as the techniques from \cite{McK} and standard arguments (see for example the convergence arguments in \cite{HM} or the method of moment arguments in \cite{Bi,Ta}) suffice to prove such convergence. We only quoted part of Theorem 1.1 of \cite{McK}; the rest of it refers to convergence of the corresponding cumulative distribution functions for graphs satisfying the two conditions in the theorem, and his argument applies with trivial modifications in our setting.

Though we do not examine it here, another natural avenue one could explore is the distribution of gaps between adjacent, normalized eigenvalues. This was studied in \cite{JMRR} for $d$-regular graphs. Their numerics support a GOE spacing law, which also governs the behavior for the eigenvalues of the ensemble of real symmetric matrices, but we are far from having a proof in this setting. The distribution of gaps is significantly harder than the density of eigenvalues, and it was only recently (see \cite{ERSY,ESY,TV1,TV2}) where these spacing measures were determined for non-Gaussian random matrix ensembles. There is now a large body of work on the density of eigenvalues and the gaps between them for different structured random matrix ensembles; see \cite{FM,Fo,Meh} for a partial history and the general theory, and \cite{BLMST,BCG,BHS1,BHS2,HM,KKMSX} and their references for some results on structured ensembles.



\section{Combinatorial Preliminaries}\label{sec:combprelim}

In this section we expand upon the ideas in the introduction, and develop some appropriate combinatorial notions. In particular, we introduce closed acyclic path patterns, which play a crucial role in our work.

We begin by formalizing the notion of a randomly weighted graph. Suppose as before that $G \in \mathcal{R}_{N,d}$ has adjacency matrix ${A= \big(a_{ij}\big)}$, and let $\w$ be a random variable whose probability density has finite moments. Let $\textbf{w} = \big\{w_{ij} : 1 \leq i \leq j \leq N\big\}$ denote a set of independent random variables drawn from $\w$, and form an $N \times N$ matrix ${A_\textbf{w} = \big(b_{ij}\big)}$, where
\be
b_{ij}\ = \ \begin{cases}
w_{ij} a_{ij} & \quad \text{if } i \leq j \\
w_{ji} a_{ji} & \quad \text{otherwise.}
\end{cases}
\ee
Observe that $A_\textbf{w}$ is a real symmetric matrix, and that $b_{nn} = 0$ for all $n$.  We may therefore interpret $A_\textbf{w}$ as the adjacency matrix of a weighted graph $G_\textbf{w}$ whose edges are weighted by the random variables $\textbf{w}$; equivalently, $G_{\textbf{w}}$ is the Hadamard product of our weight matrix and $G$'s adjacency matrix. We also note that at most $dN / 2$ of the entries $b_{ij}$ are nonzero.

We are interested in the relationship between the distribution $\w$ and the corresponding spectral distribution. Denote the eigenvalues of $A_\textbf{w}$ by ${\lambda_1 \leq \lambda_2 \leq \cdots \leq \lambda_N}$, and let $\nu_{d,G_\textbf{w}}$  be the uniform measure on this spectrum, as in \eqref{eq:SpectralDistr}; its density is thus \be {\rm d}\nu_{d,G_\textbf{w}}(x) \ = \ \frac1{N} \sum_{n=1}^N \delta(x - \lambda_n){\rm d}x, \ee where $\delta(u)$ is the Dirac delta functional.\footnote{We write $d$ for the degree of regularity, and ${\rm d}$ for differentials.} While we do not need the subscript $d$ as $G_{\textbf{w}}$ implicitly encodes the degree of regularity $d$, we prefer to be explicit and highlight the role of this important parameter. By definition and the eigenvalue trace formula, the $k^\text{th}$ moment $\mu_{\nu_{d,G_{\textbf{w}}}}(k)$ of the spectral distribution is
\begin{equation}
\label{eq:moment1}
\mu_{\nu_{d,G_{\textbf{w}}}}(k)
\ = \  \intii x^k {\rm d}\nu_{d, G_\textbf{w}}(x)
\ = \  \frac{1}{N} \sum_{n = 1}^{N} \lambda_n^k
\ = \  \frac{1}{N} {\rm Tr}(A_\textbf{w}^k);
\end{equation} we write $\mu_{\nu_{d,G_{\textbf{w}}}}(k)$ to emphasize that the regularity $d$ is fixed and we are studying a specific weighted graph $G_\textbf{w}$.

The following approach is standard and allows us to convert information on the matrix elements of $A_{\textbf{w}}$ (which we know) to information on the eigenvalues (which we desire). We have
\begin{equation}
\label{eq:moment2}
{\rm Tr}(A_\textbf{w}^k)\ = \ 	\sum_{i_1=1}^{N} \sum_{i_2=1}^{N} \cdots \sum_{i_k=1}^{N}
		b_{i_1 i_2} b_{i_2 i_3} \cdots b_{i_k i_1}.
\end{equation}
Thus we see that the $k^\text{th}$ moment of the spectral distribution associated to $G_\textbf{w}$ is the average weight of a closed walk of length $k$ in $G$ (where by the weight of a walk we mean the product of the weights of all edges traversed, counted with multiplicity).

Since we are interested in the dependence on the distribution $\w$, not on the specific values of the $N(N+1)/2$ random variables $\textbf{w} = (w_{ij})_{1 \le i,j \le N}$, we average over $\textbf{w}$ drawn from $\w$'s density $p_\w$ to obtain the `typical' $k^\text{th}$ moment $\mu_{d,\w}(k;G)$ of the weighted spectral distributions:
\be
\mu_{d,\w}(k;G) \ := \
	\intii \mu_{\nu_{d,G_{\textbf{w}}}}(k) {\rm d}\textbf{w}
	\	=\ \intii \cdots \intii \mu_{\nu_{d,G_{\textbf{w}}}}(k) \prod_{1 \le i \le j \le N} p_\w(w_{ij}){\rm d}w_{ij},
\ee
where $p_\w$ is the density function corresponding to distribution $\w$.

To build intuition for the later calculations, we calculate the first and second moments.

\begin{lem}[First Two Moments]\label{lem:firsttwoments} Fix $d$, $G \in \mathcal{R}_{N,d}$ and $\w$. We have $\mu_{d,\w}(1;G) = 0$ and $\mu_{d,\w}(2;G) =  d \mu_{\w}(2)$, where $\mu_{\w}(k)$ is the $k$\textsuperscript{${\rm th}$} moment of $\w$. Thus $\mu_{d,\w}(1) = 0$ and $\mu_{d,\w}(2) = d \mu_{\w}(2)$.
\end{lem}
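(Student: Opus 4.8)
The plan is to compute $\mathrm{Tr}(A_\textbf{w}^k)$ for $k=1,2$ using the expansion in \eqref{eq:moment2}, take the expectation over the weights $\textbf{w}$, and then divide by $N$. For $k=1$ this is immediate: $\mathrm{Tr}(A_\textbf{w}) = \sum_{i} b_{ii} = 0$ since $b_{nn} = 0$ for all $n$ (as noted after the definition of $A_\textbf{w}$), so $\mu_{\nu_{d,G_\textbf{w}}}(1) = 0$ for every choice of weights, hence $\mu_{d,\w}(1;G) = 0$ and averaging over $G$ gives $\mu_{d,\w}(1) = 0$.

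For $k=2$, the key identity is $\mathrm{Tr}(A_\textbf{w}^2) = \sum_{i_1=1}^N \sum_{i_2=1}^N b_{i_1 i_2} b_{i_2 i_1} = \sum_{i_1,i_2} b_{i_1 i_2}^2$, using symmetry of $A_\textbf{w}$. First I would observe that $b_{i_1 i_2}^2 = w_{i_1 i_2}^2 a_{i_1 i_2}^2 = w_{i_1 i_2}^2 a_{i_1 i_2}$ (since $a_{ij} \in \{0,1\}$, up to reindexing $i \le j$), so the sum is over ordered pairs $(i_1,i_2)$ joined by an edge of $G$. There are exactly $dN$ such ordered pairs because $G$ is $d$-regular with $N$ vertices. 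Next I would integrate over $\textbf{w}$: since the $w_{ij}$ are i.i.d.\ drawn from $\w$, each term contributes $\mu_\w(2)$ upon averaging, so $\E[\mathrm{Tr}(A_\textbf{w}^2)] = dN \cdot \mu_\w(2)$. Dividing by $N$ yields $\mu_{d,\w}(2;G) = d\,\mu_\w(2)$, and since this holds for every $G \in \mathcal{R}_{N,d}$, averaging over $G$ gives $\mu_{d,\w}(2) = d\,\mu_\w(2)$ as well.

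There is no real obstacle here; the computation is entirely routine. The only minor point requiring a word of care is the bookkeeping between the indexing convention $w_{ij}$ for $i \le j$ and the symmetric matrix entries $b_{ij}$ — one should note that the closed walk $i_1 \to i_2 \to i_1$ traverses the single edge $\{i_1, i_2\}$ twice, so its weight is $w_{i_1 i_2}^2$ (with $i_1 \le i_2$, say), and the contribution to the trace is counted once for each of the two orderings $(i_1,i_2)$ and $(i_2,i_1)$, which is already accounted for in the factor $dN$ (versus $dN/2$ edges). The independence of the weights makes the expectation factor trivially, so the ``hard part'' is essentially just confirming that the edge-count is $dN$ ordered pairs rather than $dN/2$.
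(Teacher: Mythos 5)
Your proposal is correct and follows essentially the same argument as the paper: the first moment vanishes because the diagonal of $A_\textbf{w}$ is zero, and the second moment is computed from ${\rm Tr}(A_\textbf{w}^2)$ by counting edge-incidences and using independence of the weights. The only cosmetic difference is that you sum over the $dN$ ordered pairs joined by an edge, while the paper sums over the $dN/2$ unordered pairs with a factor of $2$; the bookkeeping is equivalent.
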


\begin{proof} Since $b_{nn} = 0$ for all $n$, we see that
\be
\mu_{d,\w}(1;G)
\ =\ \intii \cdots \intii \mu_{\nu_{d,G_{\textbf{w}}}}(1) {\rm d}\textbf{w}
\ =\ \intii \cdots \intii \frac{1}{N} \sum_{n=1}^{N} b_{nn} \prod_{1 \le i \le j \le N} p_\w(w_{ij}){\rm d}w_{ij}
\ = \ 0.
\ee

To compute the second moment, we use $G$ is $d$-regular, $b_{ij} = a_{ij} w_{ij}$, and $b_{nn} = 0$ and $b_{ij} = b_{ji}$. Note the number of non-zero $a_{ij}$ is $dN/2$ (each vertex has $d$ edges emanating from it, and each edge is doubly counted), and recall $\mu_\w(2)$ denotes the second moment of the weight distribution $\w$. We obtain
\bea
\mu_{d,\w}(2;G)
&\ =\ & \intii \cdots \intii \mu_{\nu_{d,G_{\textbf{w}}}}(2) {\rm d}\textbf{w}
\ =\ \intii \cdots \intii \frac{2}{N} \sum_{1 \leq i < j \leq N} b_{ij}^2 p_\w(w_{ij}){\rm d}w_{ij}\nonumber\\
&\ =\ & \frac{2}{N}
		\sum_{\substack{1 \leq i < j \leq N \\ a_{ij} = 1}}
			\intii w_{ij}^2p_\w(w_{ij}){\rm d}w_{ij}
 \ =\ \frac{2}{N}
		\sum_{\substack{1 \leq i < j \leq N \\ a_{ij} = 1}} \mu_\w(2) \nonumber\\
&\ = \ & \frac{2}{N} \cdot \frac{dN}{2} \mu_\w(2) \ = \ d \mu_w(2).
\eea
\end{proof}

The first two moments are independent of $G$; however, this is not the case for higher moments (for example, in the third moment we have the possibility of a loop). For these higher moments, we need to perform an averaging over $G$ as well, and study \be \mu_{d,\w}(k) \ = \ \frac1{|\mathcal{R}_{N,d}|} \sum_{G\in\mathcal{R}_{N,d}} \mu_{d,\w}(k;G).\ee

While we can compute any $\mu_{d,\w}(k)$, the calculations quickly become very involved, and indicate the need for a unified approach if we desire a tractable closed form expression. For example, the average (over weights drawn from a fixed $\w$ and $G \in \mathcal{R}_{N,d}$) for the next two even moments are
\bea\label{eq:fourthandsixmoments} \mu_{d,\w}(4) & \ = \ & d \mu_\w(4) + 2d(d-1) \mu_\w(2)^2 \nonumber\\
\mu_{d,\w}(6) & \ = \ &	d \mu_\w(6) + 6d(d-1) \mu_\w(4) \mu_\w(2) + [3d(d-1)^2 + 2d(d-1)(d-2)] \mu_\w(2)^3,\nonumber\\
\eea
where as always $\mu_\w(k)$ denotes the $k^\text{th}$ moment of the weight distribution $\w$ (the odd moments are easily shown to vanish). We prove these formulas in Lemma \ref{lem:MomentsIdentity}.

Recall that our goal is to find a distribution $\w$ so that $T_d \w = \w^{(\lambda)}$ for some $\lambda$, normalized to have second moment equal to 1/4 (the second moment of the semi-circle). Our second moment calculation in Lemma \ref{lem:firsttwoments} suggests that $\lambda = \sqrt{d}$. If the semi-circle is a fixed eigendistribution, then we must have $\mu_{d,\w}(4) = d^2/8$ and $\mu_{d,\w}(6) = 5d^3/8$. From \eqref{eq:fourthandsixmoments}, we see that if we choose $\w$ so that the fourth moment is 1/8 then we do get $\mu_{d,\w}(4) = d^2/8$, and if the sixth moment of $\w$ is also $5/64$ then $\mu_{d,\w}(6) = 5d^3/64$. These results suggest that we can inductively show that the semi-circle is a fixed eigendistribution, but a more involved calculation (see Lemma \ref{lem:MomentsIdentity}) shows this breaks down at the eighth moment: \bea\label{eq:averageeighthmoment} \mu_{d,\w}(8) & \ = \ & d\mu_{\w}(8) + 8d(d-1)\mu_{\w}(6)\mu_{\w}(2) + 6d(d-1)\mu_{\w}(4)^2
\nonumber\\ & & \  +\ 16d(d-1)^2\mu_{\w}(4)\mu_{\w}(2)^2 + 12d(d-1)(d-2)\mu_{\w}(4)\mu_{\w}(2)^2  \nonumber\\ & & \ + \ 4d(d-1)^3\mu_{\w}(2)^4 + 8d(d-1)^2(d-2)\mu_{\w}(2)^4\nonumber\\ & & \ + \ 2d(d-1)(d-2)(d-3)\mu_{\w}(2)^4. \eea If $\w$ is an eigendistribution of $T_d$ with $\lambda = \sqrt{d}$ then $\mu_{d,\w}(8)$ must equal $d^4 \mu_\w(8)$, which from the above implies \be\mu_{\w}(8) \ = \ \frac{7}{128} + \frac{1}{128(d^2 + d + 1)}. \ee Note this is almost, but not quite, the eighth moment of the normalized semi-circle (which is 7/128).

To unify the derivation of \eqref{eq:fourthandsixmoments} and \eqref{eq:averageeighthmoment}, as well as the higher moments, we introduce some notation. This allows us to give a compact, tractable closed form expression for these moments, and helps us prove there is a unique eigendistribution (and determine its moments).

\subsection{Closed acyclic path patterns}\label{sect:capp}

From \eqref{eq:moment1} and \eqref{eq:moment2}, it is clear that moments of the spectral distribution are closely related to the set of closed walks in $G$. Moreover, we shall demonstrate below that it suffices to restrict our attention to walks containing no cycles, as all the walks with at least one closed cycle contribute a negligible amount to \eqref{eq:moment2}. We now introduce a combinatorial object which will keep track of all closed walks on a large tree.

\begin{defi}
A \emph{closed acyclic path pattern} (\capp) is a string of symbols such that
\begin{enumerate}
\item every symbol which appears at all appears an even number of times; and
\item in the substring of symbols between any two consecutive instances of the same symbol, every symbol which appears at all appears an even number of times.
\end{enumerate}
\end{defi}
We call two \capps\ \emph{equivalent} if they differ only by a relabeling of the symbols.
The following is the \emph{raison d'\^{e}tre} for our definition.

\begin{lem}[Classification of closed walks]\label{lem:cappclassification}
The closed acyclic path patterns classify the closed walks beginning at a given vertex in a large tree.
\end{lem}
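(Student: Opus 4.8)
The goal is to establish a bijection between equivalence classes of closed acyclic path patterns (up to relabeling) and closed walks based at a fixed vertex $v_0$ in an infinite $d$-regular tree $\mathcal{T}_d$ — or, since the walk never revisits a vertex via a different edge, equivalently walks in any sufficiently large tree. I would set this up as a concrete two-sided correspondence rather than an abstract counting argument.

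\medskip

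\noindent\textbf{Plan.} Given a closed walk $W = (v_0, v_1, \dots, v_{k-1}, v_0)$ in a tree, each step $v_{i} \to v_{i+1}$ traverses an edge. In a tree every edge traversed must be traversed an even number of times overall (since the walk is closed, so each edge is crossed as often in one direction as the other), and more strongly, restricted to any sub-walk between two consecutive visits to the same vertex the walk is again closed based at that vertex, so again each edge is used an even number of times on that sub-walk. This is exactly conditions (1) and (2) of the \capp\ definition once we record the sequence of \emph{edges} used. So first I would define the \emph{edge-label word} of $W$: assign to each distinct edge of the tree that $W$ uses a distinct symbol, and read off the symbols in the order the edges are traversed. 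The claim will be that this word is a \capp, and that two walks give equivalent words if and only if they are ``the same walk up to relabeling vertices of the tree'' — but since we are counting walk \emph{types} (isomorphism classes of rooted walks), this is precisely the equivalence we want.

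\medskip

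\noindent\textbf{Key steps, in order.} (i) Show the edge-label word of any closed walk in a tree satisfies (1): trace along the walk; the walk returns to $v_0$, so the total displacement across each edge (thought of as separating the tree into two components) is zero, forcing an even count. (ii) Show it satisfies (2): if $s$ is a symbol for an edge $e=\{a,b\}$ with $a$ closer to the root, then between two consecutive traversals of $e$ the walk sits entirely in the subtree hanging off $b$ (or entirely outside it), and in either case forms a closed walk, so again every edge-symbol in that window appears evenly — this is the heart of the matter and mirrors the recursive structure of the definition. (iii) Conversely, given a \capp\ $P$, reconstruct a walk: process the symbols left to right, maintaining a ``current vertex'' in the tree; when a symbol is seen for the first time, it must correspond to stepping along a \emph{new} edge to a previously-unvisited neighbor (here I use that the tree has degree $\ge$ the number of fresh edges needed at any vertex, which is where ``large tree'' enters); when a symbol is seen again, conditions (1)–(2) guarantee we are currently at the vertex from which that edge was last taken, so the step is forced. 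One checks the walk closes up using condition (1) globally. (iv) Check the two maps are mutually inverse up to the relabeling equivalence on \capps\ and up to rooted-tree-automorphism on walks, and note that the count of first-time-seen symbols is the number of distinct edges, hence the number of distinct non-root vertices, of the underlying walk.

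\medskip

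\noindent\textbf{Main obstacle.} The subtle point is step (iii), the well-definedness of the reconstruction: one must prove, by induction along the prefix of the \capp\ using condition (2), that at the moment a repeated symbol $s$ is encountered the current vertex is exactly the endpoint at which edge $s$ was most recently left, so that re-traversing $s$ is legal and unambiguous; equivalently, that the intervening block — which by (2) has every symbol appearing evenly — corresponds to a closed sub-walk returning to that vertex. This is really the content of the lemma, and the recursive/self-similar phrasing of condition (2) is tailored precisely so the induction goes through. I would also remark that one must confirm no cycle is ever created, i.e. a first-occurrence symbol always leads to a genuinely new vertex: if it led back to an earlier vertex we would close a cycle, contradicting that the host graph is a tree — and conversely this is why these patterns classify walks \emph{on trees} specifically, which is all we need since (as the text notes) walks with cycles contribute negligibly.
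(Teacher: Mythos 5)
Your proposal is correct and follows essentially the same route as the paper's proof: both map a walk to the word of edge-symbols in order of traversal, use the fact that removing an edge separates a tree (so the endpoints of a sub-walk lie on the same side of an edge iff that edge is used an even number of times in it) to match conditions (1) and (2) of the \capp\ definition, and argue realizability by induction along the word. You spell out the reconstruction and its well-definedness in more detail than the paper, which compresses this into ``a straightforward induction on the length of the path/sequence,'' but the underlying ideas coincide.
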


\begin{proof} There is a natural map from the set of paths (closed or not) in a large tree to the set of sequences, where we treat the edges as symbols and just record the edges used in order.  It is evident that this map is ``injective'' (the relevant equivalence relations on paths and sequences coincide).  There are two issues. We must show
\begin{enumerate}
\item every closed path corresponds to a sequence which is a \capp; and
\item every \capp\ is realizable as the edge sequence of some path.
\end{enumerate}
These are not hard to see.  Removing any edge from a tree disconnects the tree into two connected components, so it makes sense to ask whether two vertices are on the ``same side'' of an edge or on ``opposite sides''.  Furthermore two vertices are on the same side of \emph{every} edge if and only if they are the same vertex.  If we follow a path in a tree, then the start and end points are on the same side of an edge if and only if we traverse that edge an even number of times.  By a straightforward induction on the length of the path/sequence, a sequence corresponds to an actual path in a tree if and only if the second condition in the definition of a \capp\ holds. Likewise, a path is closed if and only if the corresponding sequence satisfies the first condition in the definition of a \capp\ holds.
\end{proof}

We can now define the terms that will appear in Lemma \ref{lem:MomentsIdentity}, our closed form expression for the moments $\mu_{d,\w}(k)$. Given a \capp\ $\pi$, let $e_1, e_2, e_3, \ldots, e_r$ denote all the distinct symbols appearing in $\pi$, in order of appearance. Equivalently, the $e_i$ denote the edges composing the walk represented by $\pi$, ordered by first traversal.  We need the following definitions.\\

\begin{itemize}

\item We denote the set of (equivalence classes of) \capps\ of length $k$ by $\mathcal{P}_k$. Note $\mathcal{P}_k$ is empty for $k$ odd. For $\pi \in P_{2k}$, we define the \textbf{diagram} of $\pi$ to be the minimal ordered, rooted tree which is traversed by the path described by the pattern, with edges repeated according to how often the edge is traversed in each direction.

\item The \textbf{multiplicity} of a \capp\ $\pi$ is $m_\pi(d)$, where $m_\pi$ is the polynomial
\begin{equation}
\label{eq:multiplicity}
m_\pi(x)\ = \ \prod_{j=1}^r (x - \alpha_j),
\end{equation}
where $\alpha_j := \# \{i < j : e_i \text{ is adjacent to } e_j\}$; we call $\alpha_j$ the multiplicity of edge $e_j$. Note that $(d-\alpha_j)/d$ is the percentage of edges emanating from vertex $j$ that are not yet used in $\pi$ when vertex $j$ is first visited. This is used in calculating contributions to the moments, as $d-\alpha_j$ represents the number of possibilities available in choosing the next distinct edge. We measure adjacency by looking at the edges on the tree, \emph{not} by the ordering of the edges in our symbol. Thus if $\pi =abccbddbeeba$ the multiplicity of $a$ is 0, that of $b$ and of $c$ is 1, and that of $d$ and of $e$ is 2.  Figure \ref{fig:multiplicitydiagram} illustrates (in the case of a 4-regular graph) how the number of choices at each stage depends on the shape of the path so far.
\begin{figure}
\begin{center}
\scalebox{.437}{\includegraphics{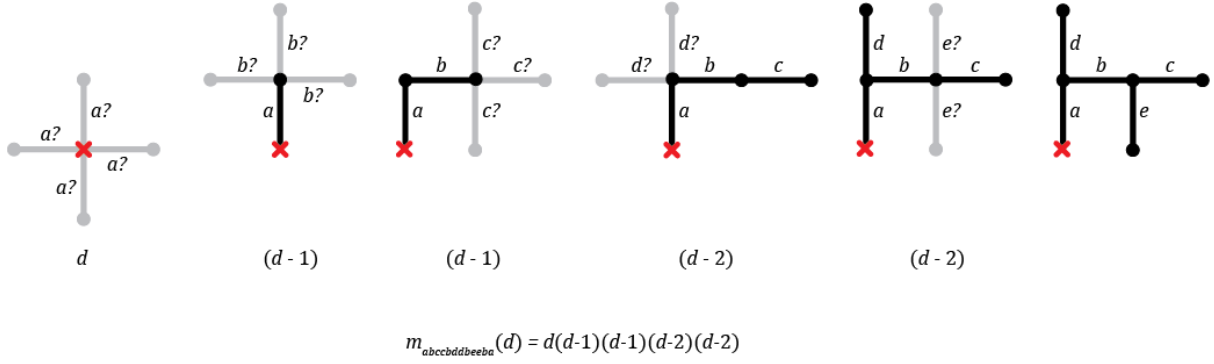}}
\caption{\label{fig:multiplicitydiagram} Choosing an realization of a particular \capp in a $d$-regular graph with $d=4$; this illustrates the multiplicity formula as an instance of standard counting principles.}
\end{center}\end{figure}


\item The \textbf{signature} of $\pi$ is
\begin{equation}
\label{eq:signature}
\sigma(\pi) \ := \  (n_1, n_2, \ldots, n_r),
\end{equation}
where $n_i$ denotes the number of times the symbol $e_i$ appears in $\pi$. Thus each $n_i$ is a positive integer. If $\pi \in \mathcal{P}_k$ then the sum of the entries of its signature is $k$.

\subitem $\diamond$: \textbf{$\mathcal{P}^{(2)}_k$} is the set of all \capps\ in $\mathcal{P}_k$ with signature $(2, 2, \dots, 2)$.

\subitem $\diamond$: \textbf{$\mathcal{P}^{(4)}_k$} is the set of all \capps\ in $\mathcal{P}_k$ with signature $(4, 2, \dots, 2)$.

\subitem $\diamond$: $\mathcal{P}^\circ_k$ is the set of all \capps\ in $\mathcal{P}_k$ excluding the pattern with signature $(k)$.

\item Given a signature $\sigma(\pi) = (n_1, n_2, \ldots, n_r)$ and a random variable $\w$, the \textbf{moment contribution associated to $\pi$ with respect to $\w$} is \be\label{eq:muwsigmaofpi} \mu_{\w}(\sigma(\pi)) \ := \ \mu_\w(n_1) \mu_\w(n_2) \cdots \mu_\w(n_r). \ee

\end{itemize}

\ \\

We can now give a complete description of the moments of the limiting spectral distribution (averaging over weights drawn from a fixed $\w$ and averaging over $G \in \mathcal{R}_{N,d}$ with $N\to\infty$). Our answer is in terms of the moments of the weight distribution $\w$ and some combinatorial data.

\begin{lem}[Moment Expansion]\label{lem:MomentsIdentity} Fix a weight $\w$ and a degree of regularity $d$. Let $\mu_\w(k)$ be the $k^\text{{\rm th}}$ moment of $\w$, $\mu_{d,\w}(k)$ the average over $G \in \mathcal{R}_{N,d}$ and over weights $w_{ij}$ drawn from $\w$ of the $k^\text{{\rm th}}$ moments of the measures $\nu_{d,G_{\textbf{w}}}$, and $\mathcal{P}_k$ the collection of all \capps\ of length $k$. For all natural numbers $k$ we have
\be\label{eq:momentexpansionformulafromthm}
\mu_{d,\w}(k) \ = \ 	\sum_{\pi \in \mathcal{P}_{k}} m_\pi(d) \mu_{\w}(\sigma(\pi)),
\ee
where $m_\pi(d)$, $\sigma(\pi)$ and $\mu_{\w}(\sigma(\pi))$ are defined in \eqref{eq:multiplicity} through \eqref{eq:muwsigmaofpi}.
\end{lem}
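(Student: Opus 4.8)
\textbf{Proof proposal for Lemma \ref{lem:MomentsIdentity}.}

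The plan is to start from the trace expansion \eqref{eq:moment1}--\eqref{eq:moment2}, average over both the weights $\textbf{w}$ and the graph $G \in \mathcal{R}_{N,d}$, and organize the resulting sum by the combinatorial ``shape'' of each closed walk of length $k$ in $G$. First I would fix $G$ and expand $\frac1N\mathrm{Tr}(A_\textbf{w}^k)$ as a sum over closed walks $i_1 \to i_2 \to \cdots \to i_k \to i_1$ on the vertices, each contributing $b_{i_1 i_2} b_{i_2 i_3}\cdots b_{i_k i_1}$. Averaging over $\textbf{w}$ kills every walk in which some edge is traversed an odd number of times (since the $w_{ij}$ are independent with mean possibly nonzero, but the key point is that walks giving an odd power of \emph{any} edge variable vanish only if $\mu_\w$ of odd order is zero---so more carefully, one groups edges by traversal count and the contribution of a walk using distinct edges $e_1,\dots,e_r$ with multiplicities $n_1,\dots,n_r$ is exactly $\mu_\w(n_1)\cdots\mu_\w(n_r) = \mu_\w(\sigma(\pi))$). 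So after the weight-average, each closed walk contributes precisely the moment contribution $\mu_\w(\sigma(\pi))$ attached to its ``edge-trace pattern'' $\pi$.

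Next I would perform the average over $G \in \mathcal{R}_{N,d}$ and take $N \to \infty$. Here I invoke the standard fact (already used implicitly in Theorem \ref{thm:LimitDistr}, going back to McKay \cite{McK}) that as $N\to\infty$ the expected number of closed walks of length $k$ from a fixed vertex that traverse at least one genuine cycle is $o(N)$, so those contribute nothing in the limit; the surviving walks are exactly the acyclic ones, i.e.\ closed walks on a tree, which by Lemma \ref{lem:cappclassification} are classified up to the relevant equivalence by \capps\ $\pi \in \mathcal{P}_k$. For a fixed equivalence class $\pi$ with distinct edges $e_1,\dots,e_r$ (listed by first traversal), I need to count, asymptotically, the number of ways to realize $\pi$ as an actual closed walk based at a vertex of a typical $G$: this is the content of the multiplicity polynomial. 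Walking through the diagram of $\pi$ edge by edge in order of first traversal, when edge $e_j$ is first used we are sitting at its near endpoint (already visited) and must choose its far endpoint among the neighbors not yet forced; the number of admissible choices is $d - \alpha_j$, where $\alpha_j = \#\{i<j : e_i \text{ adjacent to } e_j\}$ counts the edges already incident to that vertex. Multiplying over $j$ gives $\prod_{j=1}^r (d-\alpha_j) = m_\pi(d)$, and the remaining steps of the walk are determined. Summing $\frac1N \cdot N \cdot m_\pi(d)\,\mu_\w(\sigma(\pi))$ over all classes $\pi\in\mathcal{P}_k$ yields \eqref{eq:momentexpansionformulafromthm}; the factor $\frac1N$ from \eqref{eq:moment1} cancels the $N$ choices of base vertex.

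The main obstacle is making the $G$-average rigorous: one must show that the expected count (over $\mathcal{R}_{N,d}$ with the uniform measure) of realizations of a given acyclic pattern $\pi$ from a fixed vertex is $m_\pi(d) + o(1)$, and simultaneously that walks touching a cycle are negligible. This is precisely the kind of estimate McKay established for the unweighted case; since weights only multiply each walk by a bounded (on average) constant depending on $\pi$ and not on $N$, the same configuration-model / switching arguments apply verbatim, and I would cite \cite{McK} (and the convergence discussion after Theorem \ref{thm:mainresultuniquewdmoments}) rather than reprove it. A secondary, more bookkeeping-heavy point is checking that the ``injectivity'' of the edge-sequence map in Lemma \ref{lem:cappclassification} means we are neither double-counting nor under-counting patterns---i.e.\ that summing over equivalence classes $\pi$ with the multiplicity weight $m_\pi(d)$ (which already encodes the number of distinct vertex-realizations) gives each closed walk exactly once. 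I would close by sanity-checking the formula against the low-order cases: $\mathcal{P}_2$ has the single pattern $aa$ with $m(d)=d$ and $\sigma=(2)$, recovering $\mu_{d,\w}(2)=d\mu_\w(2)$ from Lemma \ref{lem:firsttwoments}, and the $k=4,6,8$ expansions should reproduce \eqref{eq:fourthandsixmoments} and \eqref{eq:averageeighthmoment}.
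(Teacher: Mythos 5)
Your proposal follows essentially the same route as the paper's proof: expand the trace as a sum over closed walks, discard walks whose traced subgraph contains a cycle by combining a uniform bound on the weight-averaged contribution of any single walk with McKay's estimate that the expected number of short cycles in $\mathcal{R}_{N,d}$ stays bounded as $N\to\infty$ (so the cyclic contribution is $O(1/N)$), and then count the surviving acyclic walks via the \capp\ classification, with $m_\pi(d)$ arising from the $d-\alpha_j$ choices at each first traversal of a new edge and $\mu_\w(\sigma(\pi))$ from the independence of the weights. The argument is correct; the only cosmetic difference is that the paper spells out the uniform bound $M(\w,k)$ on a cyclic walk's expected weight rather than deferring that step entirely to the citation.
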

\noindent
Since $\mathcal{P}_k$ is trivially empty for all odd $k$, Lemma \ref{lem:MomentsIdentity} immediately implies
\begin{cor}
All odd moments vanish in the limit.
\end{cor}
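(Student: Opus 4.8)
The plan is to read the result directly off the Moment Expansion of Lemma~\ref{lem:MomentsIdentity}, so that the entire content reduces to a parity observation about the index set $\mathcal{P}_k$. First I would invoke the formula
\be
\mu_{d,\w}(k) \ = \ \sum_{\pi \in \mathcal{P}_{k}} m_\pi(d)\,\mu_{\w}(\sigma(\pi)),
\ee
which holds for every natural number $k$. Since a sum over the empty set is zero, it suffices to show that $\mathcal{P}_k = \emptyset$ whenever $k$ is odd.

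Next I would establish this emptiness from condition (1) in the definition of a \capp. Let $\pi$ be any \capp, let $e_1, \ldots, e_r$ be its distinct symbols, and let $\sigma(\pi) = (n_1, \ldots, n_r)$ be its signature, so that $n_i$ records how often $e_i$ appears. By condition (1) each symbol appears an even number of times, so every $n_i$ is even. The length of $\pi$ is the number of symbols counted with multiplicity, namely $n_1 + n_2 + \cdots + n_r$, which is a sum of even integers and hence even. Consequently no \capp\ can have odd length, i.e.\ $\mathcal{P}_{2k+1} = \emptyset$ for all $k$.

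Combining these two steps, for odd $k$ the right-hand side of the Moment Expansion is an empty sum, so $\mu_{d,\w}(k) = 0$. There is no real obstacle here: the nested condition (2) in the definition of a \capp\ plays no role, and the argument uses only the global evenness of the symbol multiplicities together with the fact (noted when the signature is introduced) that the length equals the sum of the signature entries. The one point worth stating carefully is that this cancellation occurs purely at the level of the combinatorial index set, before any analysis of the weight moments $\mu_\w$ or the multiplicity polynomials $m_\pi(d)$ enters; hence the vanishing of the odd moments is insensitive to the choice of weight distribution $\w$ and to the degree $d$.
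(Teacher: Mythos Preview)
Your argument is correct and matches the paper's own reasoning: the paper simply notes that $\mathcal{P}_k$ is trivially empty for odd $k$ and invokes Lemma~\ref{lem:MomentsIdentity}. You have merely spelled out the parity observation (that condition~(1) forces the signature entries, hence the total length, to be even) that the paper leaves implicit.
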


\subsection{Proof of Lemma \ref{lem:MomentsIdentity}}\label{sec:proofofthmmomentsidentity}

Before proving Lemma \ref{lem:MomentsIdentity}, we show it is reasonable by deriving its prediction for the moment expansions of \eqref{eq:fourthandsixmoments} (we leave the eighth moment, \eqref{eq:averageeighthmoment}, to the reader). For the fourth moment, we need all \capps\ of length 4. We have \be \mathcal{P}_4 \ = \ \left\{ \pi_{4;1} = e_1 e_1 e_1 e_1,\ \pi_{4;2} = e_1 e_2 e_2 e_1, \ \pi_{4;3} = e_1 e_1 e_2 e_2\right\}. \ee The signatures are \be \sigma(\pi_{4;1}) \ = \ (4), \ \ \ \ \sigma(\pi_{4;2}) \ = \ (2,2), \ \ \ \ \sigma(\pi_{4;3}) \ = \ (2,2). \ee Recall the multiplicity $\alpha_j$ of $\pi$ is the number of $i < j$ such that $e_i$ is adjacent to $e_j$. We have \be m_{\pi_{4,1}}(d) \ = \ d-0, \ \ \ \ m_{\pi_{4;2}}(d) \ = \ (d-0)(d-1), \ \ \ \ m_{\pi_{4;3}}(d) \ = \ (d-0)(d-1). \ee Thus \be \sum_{\pi \in \mathcal{P}_4} m_\pi(d) \mu_{\w}(\sigma(\pi)) \ = \ d \mu_\w(4) + 2d(d-1)\mu_\w(2)\mu_\w(2), \ee in agreement with the first part of \eqref{eq:fourthandsixmoments}.

The calculation of the sixth moment is more involved, as we need to carefully determine the multiplicities. There are three cases. Note the sum of the entries of the signatures must equal 6, so there are only three possibilities: $(6)$, $(4,2)$, and $(2,2,2)$.

\begin{itemize}

\item Signature of $(6)$: The only $\pi$ that gives this is $e_1e_1e_1e_1e_1e_1$. The multiplicity is $d-0$, and the contribution is $d \mu_\w(6)$.

\item Signature of $(4,2)$: There are six possibilities: $e_1e_1e_1e_1e_2e_2$, $e_1e_1e_1e_2e_2e_1$, $e_1e_1e_2e_2e_1e_1$, $e_1e_2e_2e_1e_1e_1$, $e_1e_1e_2e_1e_1e_2$, and $e_1e_2e_1e_1e_2e_1$ (this last one is valid as the first and last $e_1$ are paired, and there are an even number of each symbol between them); all of these have signature $d(d-1)$, and the total contribution is thus $6d(d-1)\mu_\w(4)\mu_\w(2)$.

\item Signature of $(2,2,2)$: This is the first non-trivial case, as we have to carefully look and see where we are in our walk to determine the multiplicity. There are five terms. Three have multiplicity $d(d-1)^2$; they are $e_1e_1e_2e_3e_3e_2$, $e_1e_2e_3e_3e_2e_1$ and $e_1e_2e_2e_1e_3e_3$. Two have multiplicity $d(d-1)(d-2)$; they are $e_1e_1e_2e_2e_3e_3$ and $e_1e_2e_2e_3e_3e_1$. For example, for the last one we start at vertex 0 and move to vertex 1 by $e_1$, then to vertex 2 by $e_2$, then back to vertex 1 by $e_2$, then to vertex 3 by $e_3$, back to vertex 1 by $e_3$ and then return to vertex 0 by $e_1$. As all edges include vertex 1, they are all adjacent, thus $\alpha_1 = 0$, $\alpha_2 = 1$ and $\alpha_3 = 2$. The contribution from these five terms is $3d(d-1)^2 \mu_\w(2)^3 + 2d(d-1)(d-2)\mu_\w(2)^3$.

\end{itemize}

\ \\


We now turn to the proof of the Moment Expansion Lemma. We start with an informal discussion of the issues. We know that we can write the $n$\textsuperscript{th} spectral moment of a $d$-regular graph (not worrying yet about a limit along a sequence of graphs or averaging over the weights) as a sum of terms, where each term corresponds to a closed path in the graph of length $n$.  On the other hand, the summation in Lemma \ref{lem:MomentsIdentity} can also be thought of as a sum of similar terms if we interpret the summand $m_\pi(d) \mu_\w(\sigma(\pi))$ as $m_\pi(d)$ separate summands $\mu_\w(\sigma(\pi))$, one for each of the paths starting and ending at a given vertex with pattern $\pi$.  While these summations are similar, they are \emph{not} identical, since $G$ is not a tree but rather a specific $d$-regular graph which may or may not contain cycles. There are qualitatively different types of discrepancy here, both caused by small cycles: \\

\begin{itemize}

\item Paths which actually include a non-trivial cycle have no corresponding summand in our formula, since there is no path through a $d$-ary tree involving a cycle.\\

\item Paths which go partway around a cycle in both directions may have a corresponding summand in our formula, but their weights do not match.  For example, suppose there is a triangle with vertices $u$, $v$, $w$, where $u$ is the root.  Then the length 8 path $u$, $v$, $w$, $v$, $u$, $w$, $v$, $w$, $u$ uses edge $uv$ twice, edge $uw$ twice, and edge $vw$ four times, so this path contributes $\mu_\w(2)^2 \mu_\w(4)$ to the summation.  This path \emph{does} correspond to a term in our formula, $abbacddc$, but the signature is wrong.  That path contributes $\mu_\w(2)^4$ to our formula \eqref{eq:momentexpansionformulafromthm}.\\

\end{itemize}
The idea of the proof is to determine the contribution from a tree, and bound the average deviation of our $d$-regular graphs from being a tree. Although \eqref{eq:momentexpansionformulafromthm} does not give the correct spectral moments for individual graphs, it can give the correct limiting spectral moments for a sequence of graphs.  The technical condition that the number of small cycles in the graphs is growing slowly is precisely what is needed to guarantee that these discrepancies vanish in the limit. Fortunately there exist good bounds on the numbers of such small cycles in the family $\mathcal{R}_{N,d}$.




\begin{proof}[Proof of Lemma \ref{lem:MomentsIdentity}]  We first recall some notation. Given a $d$-regular graph $G$ on $N$ vertices (so $G \in \mathcal{R}_{N,d}$) and a probability distribution $\w$, we form the weighted graph $G_\textbf{w}$ whose edges are weighed by iidrv's drawn from $\w$. We denote average (with respect to the weights $w_{ij}$ being drawn from $\w$) of the $k^\text{th}$ moment of the associated spectral distributions $\nu_{d,G_{\textbf{w}}}$ by $\mu_{d,\w}(k;G)$. From \eqref{eq:moment1} and \eqref{eq:moment2} we know that $\mu_{d,\w}(k;G)$ is the average weight of a closed walk of length $k$ in $G$. The first step in the proof is to show that only acyclic walks contribute significantly to this average; i.e., all walks which contain cycles contribute a negligible amount.

We thus consider a closed path of length $k$, denoting the vertices by $i_1, i_2, \dots, i_k$. Let
\begin{equation}
\label{eq:SumWithCycles}
C_{d,G_\textbf{w}}(k) \ := \
\sum_{\substack{\langle i_1, i_2, \ldots, i_k, i_1 \rangle \\ \text{contains a cycle}}}
	b_{i_1 i_2} b_{i_2 i_3} \cdots b_{i_k i_1}
\end{equation} denote the contribution to the $k^\text{th}$ moment of $\nu_{d,G_{\textbf{w}}}$ from paths containing a cycle,
and
\be
A_{d,G_\textbf{w}}(k) \ := \
\sum_{\substack{\langle i_1, i_2, \ldots, i_k, i_1 \rangle \\ \text{contains no cycles}}}
	b_{i_1 i_2} b_{i_2 i_3} \cdots b_{i_k i_1}
\ee the contribution from the acyclic closed paths. We may thus rewrite equations \eqref{eq:moment1} and \eqref{eq:moment2} as
\begin{equation}
\label{star}
\mu_{d,G_{\textbf{w}}}(k) \ = \ \frac{1}{N} C_{d,G_\textbf{w}}(k) +
	\frac{1}{N} A_{d,G_\textbf{w}}(k).
\end{equation}

We will show that the first term tends to 0 as $N \to \infty$. This in turn implies that $\mu_{d,\w}(k)$ only depends on paths with no cycles (i.e., \capp s).
From Lemma \ref{lem:firsttwoments} we may assume $k \ge 3$.
Fix a $G\in \mathcal{R}_{N,d}$ and a weight vector $\textbf{w}$ with components independently drawn from $\w$. We may take all but $Nd/2$ of the entries of $\textbf{w}$ to be $0$ without affecting the weighted adjacency matrix; for notational convenience we label those weights which aren't necessarily $0$ by $\{w_1,w_2,\ldots,w_s\}$ (where $s=Nd/2$).

Choose some $k$-cycle in $G$; as it can only traverse these weighted edges, its contribution is $w_1^{r_1} w_2^{r_2} \cdots w_s^{r_s}$, where $r_i \geq 0$ and $\sum r_i = k$. Here $r_i$ represents the total number of times our $k$-cycle traverses the edge with weight $w_i$. Averaging over $\w$ and using the independence of the weights, we have that the expected contribution of a $k$-cycle is
\be
\begin{split}
\E[w_1^{r_1}\cdots w_s^{r_s}]
&\ = \
\E[w_1^{r_1}] \cdots \E[w_s^{r_s}] \\
&\ = \
\mu_\w(r_1) \mu_\w(r_2) \cdots \mu_\w(r_s) \\
&\ = \
\mu_\w(1)^{\alpha_1} \mu_\w(2)^{\alpha_2} \cdots \mu_\w(s)^{\alpha_s}
\end{split}
\ee
for some non-negative integers $\alpha_i$ satisfying $\sum_{i=1}^s \alpha_i = k$. This immediately implies that
\be
\alpha_{k+1}
\ = \  \alpha_{k+2}
\ = \  \cdots
\ = \  \alpha_s
\ = \  0,
\ee
whence
\be
\E[w_1^{r_1} \cdots w_s^{r_s}]
\ = \  \mu_\w(1)^{\alpha_1} \cdots \mu_\w(k)^{\alpha_k}.
\ee

Let \be M\ = \ \max \left\{\mu_\w(1)^{\alpha_1} \mu_\w(2)^{\alpha_2} \cdots \mu_\w(k)^{\alpha_k} : \alpha_i\geq 0, \sum_{i=1}^{k} i\alpha_i = k\right\}. \ee  Note that $M$ depends only $\{\mu_\w(i)\}_{i=1}^k$ (the first $k$ moments of $\w$) and $k$; in particular, it is bounded independent of $N$. We highlight this fact by writing $M = M(\w,k)$.

Define $C_{G,i}$ to be the total number of $i$-cycles in $G$. For a fixed weight distribution $\w$, the contribution of the paths with cycles to $\mu_{d,\w}(k)$ from averaging over weights drawn from $\w$ and graphs $G \in \mathcal{R}_{N,d}$ is
\bea & &
\frac1{|\mathcal{R}_{N,d}|} \sum_{G\in\mathcal{R}_{N,d}} \intii \cdots \intii \frac{1}{N} C_{d,G_\textbf{w}}(k) \prod_{1 \le i \le j \le N} p_\w(w_{ij}) {\rm d}w_{ij}
\nonumber\\ & & \ \ \ \ \ \le \
\frac{1}{|\mathcal{R}_{N,d}|} \sum_{G \in \mathcal{R}_{N,d}}
	\left(\frac{1}{N}\sum_{i = 3}^k C_{G,i} M(\w,k) \right)
\nonumber\\ & & \ \ \ \ \ = \
\frac{M(\w,k)}{N} \cdot \sum_{i = 3}^k \frac{1}{|\mathcal{R}_{N,d}|}
	\sum_{G \in \mathcal{R}_{N,d}} C_{G,i}
\nonumber\\ & & \ \ \ \ \ = \
\frac{M(\w,k)}{N} \cdot \sum_{i = 3}^k C_{i,N,d}.
\eea
\noindent
Here $i \geq 3$, since otherwise $G$ has no cycles.
By Lemma 4.1 of \cite{McK}, we know that for $i \ge 3$ we have
\be
\lim_{N\to\infty} C_{i,N,d}\ = \ \frac{(d-1)^i}{2i}.
\ee
Combining this with the above, we deduce that the contribution from the paths with a cycle to $\mu_{d,\w}(k)$ is $O(1/N)$, and thus negligible as $N\to\infty$. In particular, this implies \be \mu_{d,\w}(k) \ = \ \lim_{N\to\infty} \frac1{|\mathcal{R}_{N,d}|} \intii \cdots \intii \frac{1}{N} A_{d,G_\textbf{w}}(k) \prod_{1 \le i \le j \le N} p_\w(w_{ij}) {\rm d}w_{ij}. \ee

The proof is completed by noting that this is equivalent to $\sum_{\pi \in \mathcal{P}_k} m_\pi(d) \mu_\w(\sigma(\pi))$. This follows from the definition of \capps, multiplicities and signatures, and similar arguments as in \cite{McK}. The factor $\mu_\w(\sigma(\pi))$ is clear, arising from how often each weight occurs and then averaging over the weights.

The factor $m_\pi(d)$ requires a bit more work. As we take the limit as $N\to\infty$, there is no loss in assuming we have a tree. We must have a closed path in order to have a contribution, and thus $k$ must be even and each edge must be traversed an even number of times (as we have a tree, there are no cycles). By Lemma \ref{lem:cappclassification} there is a one-to-one correspondence between \capps\ and legal walks along edges. Each time we hit a vertex and go off along a new edge, the number of choices we have equals $d$ (the regularity degree) minus the number of edges we have already taken from the vertex. This is why the multiplicity of edge $e_j$ is $d$ minus the number of edges $e_i$ adjacent to $e_j$ with $i < j$, and adjacency is measured relative to the tree and \emph{not} the string of edges. This completes the proof. \end{proof}


While Lemma \ref{lem:MomentsIdentity} gives a closed form expression for the limiting moments, it is not immediately apparent that it is a \emph{useful} expansion. We need a way of computing the sum over $\pi$ of $m_\pi(d) \mu_\w(\sigma(\pi))$, which 
is our next subject.




%

\subsection{Counting walks by signature}\label{sec:countingwalksbysig}

We conclude this section by showing how to count walks with certain simple signatures. We use these results to prove our theorems on eigendistributions in \S\ref{sec:eigendistr}. We first recall some notation. \\

\begin{itemize}

\item $\mathcal{P}^{(2)}_k$ is the set of all \capps\ in $\mathcal{P}_k$ with signature $(2,2,2,\ldots,2)$.

\item $\mathcal{P}^{(4)}_k$ is the set of all \capps\ in $\mathcal{P}_k$ with signature $(4,2,2,\ldots, 2)$.

\item $\mathcal{T}_k$ is the set of all triples $(\pi,x,y)$, where $\pi\in \mathcal{P}^{(2)}_k$ and $x$, $y$ are symbols corresponding to distinguished edges in the diagram, which must be adjacent and first traversed in that order.

\item $\mathcal{P}^\circ_k$ is the set of all \capps\ in $\mathcal{P}_k$ excluding the pattern with signature $(k)$.

\end{itemize}

\ \\

\begin{lem}[Counting Walks without Repeated Edges]
\label{lem:CountingWalks}
There are exactly $\frac{1}{k+1}\binom{2k}{k}$ \capps\ of length $2k$ and signature $(2,2,2,\ldots,2)$.  That is,
$|\mathcal{P}^{(2)}_{2k}|=\frac{1}{k+1}\binom{2k}{k}$.
\end{lem}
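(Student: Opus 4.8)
The plan is to set up a bijection between $\mathcal{P}^{(2)}_{2k}$ and a combinatorial family counted by the Catalan number $\frac{1}{k+1}\binom{2k}{k}$ — most naturally the set of ordered rooted trees with $k$ edges, or equivalently balanced Dyck paths of length $2k$. The link is already implicit in the paper: by Lemma \ref{lem:cappclassification}, a \capp\ of length $2k$ with signature $(2,2,\ldots,2)$ is exactly a closed walk in a tree that traverses each of its $k$ edges exactly twice (once in each direction), and such a walk is precisely a depth-first traversal of an ordered rooted tree on $k$ edges. So the ``diagram'' of $\pi$ (defined in \S\ref{sect:capp}) is an ordered rooted tree with $k$ edges, and $\pi$ is recovered from its diagram by reading off edge labels in the order first traversed during the canonical Euler tour. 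I would make this correspondence explicit and check it is a bijection.

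The key steps, in order: First, given $\pi \in \mathcal{P}^{(2)}_{2k}$, invoke Lemma \ref{lem:cappclassification} to realize $\pi$ as (the equivalence class of) a closed walk $W$ in a large tree starting at a root $v_0$; since every symbol appears exactly twice and the walk is acyclic, $W$ visits a finite subtree $D(\pi)$ with exactly $k$ edges, traversing each once in each direction, so $W$ is the Euler tour of $D(\pi)$. Second, record $D(\pi)$ as an \emph{ordered} rooted tree, where the order in which the children of a vertex are listed is the order in which the walk first descends into them; this is well-defined on equivalence classes since relabeling symbols does not change the tree or the traversal order. Third, construct the inverse: from an ordered rooted tree $\tau$ with $k$ edges, perform the canonical depth-first Euler tour, assign a fresh symbol to each edge at the moment it is first traversed, and output the resulting length-$2k$ string $\pi(\tau)$; verify $\pi(\tau)$ satisfies both \capp\ conditions (each symbol appears twice, once down and once up; and between the two occurrences of an edge-symbol lies exactly the Euler tour of the subtree below that edge, in which every symbol appears twice). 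Fourth, check the two maps are mutually inverse up to relabeling, hence $|\mathcal{P}^{(2)}_{2k}|$ equals the number of ordered rooted trees with $k$ edges, which is the Catalan number $\frac{1}{k+1}\binom{2k}{k}$ (a standard fact, citable or provable via the Dyck path encoding: map each ``descend along new edge'' to an up-step and each ``ascend'' to a down-step).

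The main obstacle is the bookkeeping in the bijection's well-definedness, specifically confirming that the second \capp\ condition is exactly equivalent to the statement ``the substring between the two copies of an edge-symbol is the Euler tour of the subtree hanging below that edge.'' One direction is the content of Lemma \ref{lem:cappclassification} (a string is a tree-walk iff the parity condition holds); the other is that, once we know we're dealing with a tree-walk in which every edge is used exactly twice, the block between two uses of edge $e$ is forced to be the full excursion into the component of $T \setminus e$ not containing the root — this uses that the walk is closed and cannot re-cross $e$ in between. I would state this cleanly as a lemma about Euler tours rather than re-deriving it from scratch, and note the nesting/laminar structure of these blocks is exactly what produces the ordered-tree (equivalently, balanced-parenthesis) structure. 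The remaining steps are routine once this is pinned down.
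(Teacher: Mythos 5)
Your proposal is correct and follows essentially the same route as the paper: the paper's proof simply observes that such a walk is determined by its diagram regarded as an ordered rooted tree, which is counted by the Catalan number. You have merely filled in the details of the bijection (Euler tour in one direction, well-definedness up to relabeling, and the inverse map) that the paper leaves implicit.
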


\begin{proof}
Walks of signature $(2,2,2,\ldots,2)$ use each edge exactly twice. Such a walk is determined by its diagram, regarded as an ordered tree (in the sense that the children of each vertex ``remember'' in which order they were visited).  It is well-known that the Catalan numbers count such trees, and appear throughout random matrix theory (see for example \cite{AGZ}).
\end{proof}

The following lemma plays a key role in computing the lower order term to the moments in Theorem \ref{thm:mainresultuniquewdmoments}, and allows us to improve our error from $O(1/d)$ to $O(1/d^2)$.

\begin{lem}[Serendipitous Correspondence]
\label{lem:Serendipity}
There is a two-to-one correspondence between length-$2k$ \capp s whose signature is $(2,2,2,\ldots,2)$ with a distinguished pair of adjacent edges, and length-$2k$ \capps\ with signature $(4,2,2,\ldots,2)$. That is,
$|\mathcal{T}_{2k}|=2|\mathcal{P}^{(4)}_{2k}|$.
\end{lem}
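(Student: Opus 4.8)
The plan is to exhibit an explicit combinatorial bijection, building on the tree/diagram correspondence already established in Lemma~\ref{lem:cappclassification} and used in Lemma~\ref{lem:CountingWalks}. Recall that a length-$2k$ \capp\ with signature $(2,2,\ldots,2)$ corresponds to an ordered rooted tree on $k$ edges together with a canonical depth-first closed walk traversing each edge exactly twice (once ``down'', once ``up''). A \capp\ with signature $(4,2,\ldots,2)$ corresponds to an ordered rooted tree on $k$ edges with one distinguished edge $f$, together with a closed walk that traverses $f$ four times and every other edge twice; the second \capp\ condition forces the ``extra'' double-traversal of $f$ to be nested consistently, so in fact the walk is determined by the tree, the choice of $f$, and which of the (at most two) legal interleavings of the two passes through $f$ one picks --- but because the two passes through $f$ are interchangeable, the data reduces to (tree, distinguished edge $f$, and a choice of which of the two ``visits'' to the subtree below $f$ comes first, modulo nothing --- giving exactly a $2$-to-$1$ overcount relative to the genuine pattern count). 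So the first step is to pin down precisely this normal form for both sides.

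Next I would set up the map $\Phi : \mathcal{T}_{2k} \to \mathcal{P}^{(4)}_{2k}$. Given $(\pi, x, y) \in \mathcal{T}_{2k}$ with $\pi$ of signature $(2,\ldots,2)$ and $x, y$ adjacent edges first traversed in that order, interpret $\pi$ as a depth-first walk on its diagram $T$. The edge $x$ is the parent of $y$ in $T$. I would construct a new walk by ``doubling'' the descent through $x$: when the canonical walk on $T$ first reaches the vertex between $x$ and $y$, instead of proceeding once into the subtree rooted there, the new walk goes down into the subtree below $y$, comes back up, crosses back up over $x$, then immediately crosses down over $x$ again, revisits the subtree below $y$, and returns. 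Concretely this produces a walk traversing $x$ four times and everything else twice; one checks it satisfies both \capp\ conditions, hence lies in $\mathcal{P}^{(4)}_{2k}$, with the quadruple-edge being (the image of) $x$. The key point is that $\Phi$ forgets the ordered pair $(x,y)$ down to just the quadruple-edge, and the preimage of a given element of $\mathcal{P}^{(4)}_{2k}$ is exactly $\{(\pi,x,y)\}$ for two choices of $y$ --- namely, in the signature-$(4,2,\ldots,2)$ pattern, the quadruple edge $e$ separates its endpoint's descendants into (at least) two ``blocks'' corresponding to the two independent descents; collapsing one of the two passes through $e$ back into a single pass gives a signature-$(2,\ldots,2)$ pattern $\pi$, and the child edge of $e$ we ``promoted'' is the distinguished $y$, with $x=e$. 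There are exactly two ways to do this collapse (one for each of the two passes), giving the $2$-to-$1$ statement.

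The main obstacle I anticipate is verifying that $\Phi$ is well-defined and exactly $2$-to-$1$, i.e.\ that every length-$2k$ \capp\ of signature $(4,2,\ldots,2)$ arises this way, and from precisely two triples. The subtlety is combinatorial bookkeeping around the second \capp\ condition: between the two consecutive appearances of the quadruple edge $e$, the ``inside'' symbols must each appear an even number of times, which restricts how the four copies of $e$ can be interleaved with the traversals of the rest of the tree. I would argue that the only legal configuration is the nested one described above --- two full descents into the same subtree, back to back --- by an induction on $k$ or by directly analyzing the positions of the four $e$'s in the string. Once that structural rigidity is established, the counting of preimages (two, corresponding to the ordered choice of which descent is ``first'') is immediate, and combined with Lemma~\ref{lem:CountingWalks} (which identifies $|\mathcal{P}^{(2)}_{2k}|$) this would also give the explicit value $|\mathcal{P}^{(4)}_{2k}| = \tfrac12 |\mathcal{T}_{2k}|$ as a bonus, though the statement as phrased only needs the $2$-to-$1$ correspondence itself.
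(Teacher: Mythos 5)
Your map $\Phi$ does not land in $\mathcal{P}^{(4)}_{2k}$: it changes the length of the pattern. An element of $\mathcal{T}_{2k}$ has $k$ distinct symbols each appearing twice (length $2k$), while an element of $\mathcal{P}^{(4)}_{2k}$ has only $k-1$ distinct symbols (one appearing four times, $k-2$ appearing twice), also of total length $2k$. Any correct correspondence must therefore \emph{reduce the number of distinct symbols by one} while keeping the length fixed. Your construction instead inserts two extra traversals of $x$ (and, as literally described, re-traverses the entire subtree below $y$ a second time), producing a walk of length $2k+2$ or longer --- an element of $\mathcal{P}^{(4)}_{2k+2}$ at best; symmetrically, your proposed inverse (collapsing one of the two passes through the quadruple edge) lands in $\mathcal{P}^{(2)}_{2k-2}$, not $\mathcal{P}^{(2)}_{2k}$. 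There is a second error: you assert that $x$ is the parent of $y$ in the diagram. Adjacency only forces $x$ and $y$ to share a vertex, and since $x$ is traversed first, the two possibilities are that $y$ is a child edge of $x$ (the four occurrences interleave as $xyyx$) or that $x$ and $y$ are sibling child edges of a common vertex (interleaving $xxyy$). The sibling case accounts for half of $\mathcal{T}_{2k}$ and is invisible to your construction.

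The paper's correspondence is simpler and repairs both problems at once: given $(\pi,x,y)\in\mathcal{T}_{2k}$, \emph{identify} the two symbols $x$ and $y$ into a single new symbol $z$. This preserves the length, turns the signature $(2,2,\ldots,2)$ into $(4,2,\ldots,2)$, and the adjacency hypothesis is exactly what guarantees the merged string is still a valid \capp: writing the pattern as $AxByCyDxE$ or $AxBxCyDyE$, adjacency rules out the placements of the remaining symbols that would be legal for one of these forms but not for $AzBzCzDzE$, so the constraints on $A,B,C,D,E$ coincide in all three strings. The two preimages of a given $AzBzCzDzE$ are then precisely the two interleavings $xyyx$ and $xxyy$; that is where the factor of two comes from --- not from a choice of ``which descent comes first,'' which would swap $B$ and $D$ and in general produce a \emph{different} element of $\mathcal{P}^{(4)}_{2k}$.
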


\begin{proof}
In this proof, we always use the symbols $x,y$ (in that order) as the distinguished symbols for an object in $\mathcal{T}_{2k}$.  These symbols will occur either in the order $xyyx$ or $xxyy$ (the order cannot be $xyxy$ or the \capp\ condition would be violated).  Consider sequences $AxByCyDxE$ (case 1) or $AxBxCyDyE$ (case 2), where capital letters denote substrings, where every symbol occurring in ABCDE does so exactly twice in total.  In order for this to be a genuine pattern, for each non-distinguished symbol that occurs, one of the following must be true.

\begin{enumerate}

\item Both occurrences are in the same substring ($A$, $B$, $C$, $D$, or $E$).

\item One occurrence is in $A$ and the other is in $E$.

\item One occurrence is in $B$ and the other is in $D$ (case 1 only).

\item One occurrence is in $C$ and the other is in $A$ or $E$ (case 2 only).

\end{enumerate}

Since $x$ and $y$ are adjacent edges, the last two possibilities are ruled out.  But now it is not hard to see that elements of $\mathcal{P}^{(4)}_{2k}$ have the form $AzBzCzDzE$, with precisely the same conditions on $A,B,C,D,E$.  Then we correspond the patterns $AxByCyDxE$ and $AxBxCyDyE$ to the pattern $AzBzCzDzE$, giving the desired two-to-one correspondence.
\end{proof}

\section{The eigendistribution}\label{sec:eigendistr}

Our goal is to find the eigendistributions $\w^{(\lambda)}$ of the maps $T_d$ from \eqref{eq:DefnOfTd}. Recall that $T_d$ maps a given weight distribution $\w$ to a spectral distribution. In this section we prove that for each $d$ there exists a unique (up to rescaling) eigendistribution of $T_d$. To do this, we first apply Lemma \ref{lem:MomentsIdentity} to obtain a recursive identity on the moments of any eigendistribution; it will then be seen that there exists a distribution possessing these moments. Moreover, we show that after appropriate rescaling, the moments grow very similarly to those of the semicircle distribution. The two distributions are not exactly the same, and we quantify the extent to which they differ. \\

We first demonstrate that for any fixed $d$, $T_d$ has at most one eigenvalue. Given any distribution $\w$ with density $p_\w$ and any $\lambda > 0$, let $\mu_\w(k)$ denote the $k^\text{th}$ moment of $\w$ and $\mu_{\w^{(\lambda)}}(k)$ denote the $k^\text{th}$ moment of the rescaled distribution $\w^{(\lambda)}$ (see \eqref{eq:effectoflambdaonmeasure} for the effect of scaling a measure by $\lambda$). We have
\be
\mu_{\w^{(\lambda)}}(k)\ = \ \intii x^k \, d\w^{(\lambda)}(x)\ = \ \intii x^k \lambda \, p_\w(\lambda x) \, {\rm d}x\ = \ \intii \left(\frac{x}{\lambda}\right)^k  p_\w(x) \, {\rm d}x\ = \ \frac{1}{\lambda^k} \mu_\w(k).
\ee
In particular, if $\w$ is an eigendistribution with eigenvalue $\lambda$, and $\mu_{d,\w}(k)$ denotes the moments of the spectral distribution $T_d \w = \w^{(\lambda)}$, then $\mu_{d,\w}(2) = \lambda^{-2} \mu_\w(2)$. On the other hand, from Lemma \ref{lem:firsttwoments} we know that $d^{-1}\mu_{d,\w}(2) = \mu_\w(2)$, whence
\be
\lambda\ = \ d^{-1/2}.
\ee
We thus obtain a relation for the even moments of an eigendistribution:
\begin{equation}
\label{eq:SpectralMoments}
\mu_{d,\w}(2k)\ = \ d^k \mu_\w(2k).
\end{equation}
Substituting this into Lemma \ref{lem:MomentsIdentity} and simplifying yields the following formula.

\begin{lem}[Eigenmoment Formulas]
\label{lem:Eigenmoments}
Suppose $\w_d$ is an eigendistribution of $T_d$,
i.e., $T_d\w_d = \w_d^{(\lambda)}$ for some $\lambda > 0$. Denote the moments of $\w_d$ by $\mu_{\w_d}(k)$. We may assume (without loss of generality) that $\w_d$ is scaled so that $\mu_{\w_d}(2) = 1/4$ (the second moment of the normalized semi-circle distribution). Then $\mu_{\w_d}(k) = 0$ for all odd $k$, and
\begin{equation}
\label{eq:Eigenmoments}
\muwd(2k) \ = \ \frac{1}{d^k - d}
	\sum_{\pi \in \mathcal{P}^\circ_{2k}} m_\pi(d) \muwd(\sigma(\pi)).
\end{equation}
\end{lem}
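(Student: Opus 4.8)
The plan is to derive \eqref{eq:Eigenmoments} directly from the Moment Expansion Lemma (Lemma \ref{lem:MomentsIdentity}) by isolating the single \capp\ of signature $(2k)$, whose moment contribution is the ``self-referential'' term, from the rest of the sum. The starting point is the observation, already recorded in \eqref{eq:SpectralMoments}, that an eigendistribution $\w_d$ with eigenvalue $\lambda = d^{-1/2}$ satisfies $\mu_{d,\w_d}(2k) = d^k \mu_{\w_d}(2k)$; this is just the combination of the rescaling identity $\mu_{\w^{(\lambda)}}(k) = \lambda^{-k}\mu_\w(k)$ with the forced value $\lambda = d^{-1/2}$ coming from Lemma \ref{lem:firsttwoments}. (The reduction to $\lambda = d^{-1/2}$, the vanishing of odd moments, and the legitimacy of normalizing $\mu_{\w_d}(2) = 1/4$ are all established in the text preceding the lemma, so I would simply cite them.)

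First I would apply Lemma \ref{lem:MomentsIdentity} with $\w = \w_d$ and $k$ replaced by $2k$, giving
\be
\mu_{d,\w_d}(2k) \ = \ \sum_{\pi \in \mathcal{P}_{2k}} m_\pi(d)\, \mu_{\w_d}(\sigma(\pi)).
\ee
Next I would split off the unique pattern $\pi_0 = e_1 e_1 \cdots e_1$ (the $2k$-fold repetition of a single symbol), which is the only element of $\mathcal{P}_{2k}$ with signature $(2k)$; by definition $\mathcal{P}^\circ_{2k} = \mathcal{P}_{2k} \setminus \{\pi_0\}$. Its multiplicity is $m_{\pi_0}(d) = d$ (only the first edge is placed, with no earlier adjacent edges, so $\alpha_1 = 0$ and the product in \eqref{eq:multiplicity} is the single factor $d - 0$), and its moment contribution is $\mu_{\w_d}(\sigma(\pi_0)) = \mu_{\w_d}(2k)$. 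Hence
\be
\mu_{d,\w_d}(2k) \ = \ d\,\mu_{\w_d}(2k) \ + \ \sum_{\pi \in \mathcal{P}^\circ_{2k}} m_\pi(d)\, \mu_{\w_d}(\sigma(\pi)).
\ee

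Then I would substitute the eigendistribution relation $\mu_{d,\w_d}(2k) = d^k \mu_{\w_d}(2k)$ into the left-hand side and solve for $\mu_{\w_d}(2k)$:
\be
(d^k - d)\, \mu_{\w_d}(2k) \ = \ \sum_{\pi \in \mathcal{P}^\circ_{2k}} m_\pi(d)\, \mu_{\w_d}(\sigma(\pi)),
\ee
which upon dividing by $d^k - d$ (nonzero for $k \geq 2$ and $d \geq 2$) yields \eqref{eq:Eigenmoments}. There is essentially no obstacle here — the content is entirely in Lemma \ref{lem:MomentsIdentity} and the preceding eigenvalue computation — but the one point worth stating carefully is that the right-hand side of \eqref{eq:Eigenmoments} depends only on moments $\mu_{\w_d}(\sigma(\pi))$ with each $\sigma(\pi)$ a signature all of whose parts are $< 2k$ (since $\pi \neq \pi_0$ forces at least two distinct symbols, so every entry $n_i$ of $\sigma(\pi)$ satisfies $n_i \leq 2k - 2$), so the formula is genuinely recursive and, together with the normalization $\mu_{\w_d}(2) = 1/4$, determines all even moments; I would remark on this since it is what makes the lemma usable in \S\ref{sec:eigendistr} for the uniqueness and the semicircle comparison. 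Degenerate cases ($k = 1$, where $\mathcal{P}^\circ_2 = \emptyset$ and $d^k - d = 0$ consistently, and the base normalization) should be noted in a sentence.
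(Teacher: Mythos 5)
Your proposal is correct and is exactly the derivation the paper intends: the paper's "proof" of Lemma \ref{lem:Eigenmoments} is the single sentence that one substitutes \eqref{eq:SpectralMoments} into Lemma \ref{lem:MomentsIdentity} and simplifies, and your splitting off of the unique signature-$(2k)$ pattern with $m_{\pi_0}(d) = d$ is precisely that simplification made explicit. Your added remarks on the recursive structure and the degenerate $k=1$ case are consistent with how the lemma is used in \S\ref{sec:eigendistr}.
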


We can now prove Theorem \ref{thm:mainresultuniquewdmoments}, namely that there exists a unique eigendistribution, as well as determine properties of its moments.

\begin{proof}[Proof of Theorem \ref{thm:mainresultuniquewdmoments}] Since the signature $\sigma(\pi)$ consists of numbers strictly smaller than $2k$ for all $\pi \in \mathcal{P}^\circ_{2k}$, \eqref{eq:Eigenmoments} gives a recursive formula for the moments. Thus, if an eigendistribution $\w_d$ exists, then its moments are uniquely specified. The even moments are easily seen to be bounded above by 1 and below by the moments of the normalized semi-circular distribution. Thus Carleman's condition is satisfied ($\sum_{k=1}^\infty \muwd(2k)^{-1/2k} = \infty$), and the moments uniquely determine a distribution (see for example \cite{Bi, Ta}).

Lemma \ref{lem:Eigenmoments} can be used to calculate small moments with relative ease: $\muwd(2) = 1/4$,
$\muwd(4) = 1/8$, $\muwd(6) = 5/64$, and \be
\muwd(8)\ = \ \frac{7}{128} + \frac{1}{128(d^2 + d + 1)}.
\ee
From this data it seems safe to guess that the main term of $\muwd(2k)$ is
\be
c_{2k}\ := \ \frac{1}{4^k (k+1)} \binom{2k}{k}
\ee
(the $2k^\text{th}$ moment of the normalized semi-circular distribution), which we now prove. We first show that
\be
\muwd(k)\ = \ c_{k} + O(1/d),
\ee
and then with a bit more work improve the error to $O(1/d^2)$.

For odd $k$ there is nothing to prove, since both $\muwd(k)$ and $c_k$ vanish. We thus restrict our attention to even $k$, and proceed by induction. For $2k \le 8$, we have already verified the conjecture. The only role of the inductive hypothesis is to ensure that, when computing $\muwd(2k)$, we can treat all lower eigenmoments as $O(1)$. The recursion formula \eqref{eq:Eigenmoments} gives
\be
(d^k - d) \muwd(2k)
\ =\
	\sum_{\pi \in \mathcal{P}^\circ_{2k}} m_\pi(d) \muwd(\sigma(\pi)).
\ee
The total contribution from those $\pi$ which involve fewer than $k$ symbols is $O(d^{k-1})$. Thus, the main term must come from the patterns involving $k$ edges, i.e., $\pi$ whose signature $\sigma(\pi) = (2,2,\ldots,2)$. Recall $\mathcal{P}^{(2)}_k$ is the set of all \capps\ of length $k$ which possess a signature of this form. We have
\bea
(d^k - d) \muwd(2k)
&\ = \ & \sum_{\pi \in \mathcal{P}^{(2)}_{2k}} m_\pi(d) \muwd(\sigma(\pi)) + O(d^{k-1}) \nonumber\\
&\ = \ & \left(\sum_{\pi \in \mathcal{P}^{(2)}_{2k}} \muwd(\sigma(\pi))\right) d^k + O(d^{k-1}) \nonumber\\
&\ = \ & |\mathcal{P}^{(2)}_{2k}| 2^{-2k} d^k + O(d^{k-1}).
\eea
Lemma \ref{lem:CountingWalks} yields the desired conclusion.

By using the serendipitous correspondence described in Lemma \ref{lem:Serendipity}, we can sharpen the error term and obtain
\be
\muwd(2k)\ = \ c_{2k} + O(1/d^2).
\ee
\noindent
As above, we have already verified the theorem (with no error term) in the cases when $k$ is odd or at most 8. Henceforth we assume that $2k > 8$.  We analyze the contribution from patterns with at least $k-1$ distinct symbols (in other words, we allow at most one repetition), and trivially bound the contribution from the remaining by $O(d^{k-2})$. Note $\muwd(2) = 1/4$ and $\muwd(4) = 1/8$, so if $\pi \in \mathcal{P}^{(2)}_{2k}$ then $\muwd(\sigma(\pi)) = (1/4)^k$, while if $\pi \in \mathcal{P}^{(4)}_{2k}$ then $\muwd(\sigma(\pi)) = (1/8)(1/4)^{k-1}$. We have
\bea\label{eq:dkminusdmuwd2k}
(d^k - d) \muwd(2k)
&\ = \ & \sum_{\pi \in \mathcal{P}^\circ_{2k}} m_\pi(d) \muwd(\sigma(\pi)) \nonumber\\
&\ = \ &  \left(\sum_{\pi \in (\mathcal{P}^{(2)}_{2k} \cup \mathcal{P}^{(4)}_{2k})} m_\pi(d) \muwd(\sigma(\pi))\right) + O(d^{k-2}) \nonumber\\
&\ = \ & (1/4)^{k}\left(\sum_{\pi \in \mathcal{P}^{(2)}_{2k}} m_\pi(d)\right)+(1/8)(1/4)^{k-1}\left(\sum_{\pi\in \mathcal{P}^{(4)}_{2k}} m_\pi(d)\right)+O(d^{k-2})\nonumber\\
&\ = \ & (1/4)^{k}\left(\sum_{\pi\in \mathcal{P}^{(2)}_{2k}} m_\pi(d)+2\sum_{\pi \in \mathcal{P}^{(4)}_{2k}} m_\pi(d)\right)+O(d^{k-2}).
\eea
The strategy is to compute the secondary terms of $m_\pi(d)$, multiply them by the correct factor and then substitute back into \eqref{eq:dkminusdmuwd2k}.  If $\pi \in \mathcal{P}^{(2)}_{2k}$, then
\be
m_\pi(d)
\ =\ \prod_{i=1}^k (d - \alpha_i)
\ =\ d^k - \left(\sum_{i=1}^k \alpha_i \right) d^{k-1} + O(d^{k-2}),
\ee
where $\alpha_i$ is the number of edges prior to the $i^\text{th}$ which are adjacent to the $i^\text{th}$ as in  \eqref{eq:multiplicity}. Summing over $i$ gives the total number of pairs of adjacent edges in the diagram. Summing over $\mathcal{P}^{(2)}_{2k}$, we obtain
\be
\sum_{\pi \in \mathcal{P}^{(2)}_{2k}} m_\pi(d)\ = \ |\mathcal{P}^{(2)}_{2k}| d^k - |\mathcal{T}_{2k}| d^{k-1} + O(d^{k-2}).
\ee All of these terms have the same value for $\muwd(\sigma(\pi))$, namely $(1/4)^k$.

For the other summation we need only the dominant term:
\be
\sum_{\pi \in \mathcal{P}^{(4)}_{2k}} m_\pi(d)\ = \ |\mathcal{P}^{(4)}_{2k}| d^{k-1} + O(d^{k-2}).
\ee All of these terms have the same value for $\muwd(\sigma(\pi))$, namely $(1/8) (1/4)^{k-1} = (1/2) (1/4)^k$.

Using the above, we find that the contribution from $\pi \in \mathcal{P}_{2k}^{(2)} \cup \mathcal{P}_{2k}^{(4)}$ to \eqref{eq:dkminusdmuwd2k} is
\[
|\mathcal{P}^{(2)}_{2k}| \frac{d^k}{4^k} - |\mathcal{T}_{2k}| \frac{d^{k-1}}{4^k} + |\mathcal{P}^{(4)}_{2k}| \frac{d^{k-1}}{2 \cdot 4^k} + O\left((d/4)^{k-2}\right);
\]
however, by Lemma \ref{lem:Serendipity} we have $|\mathcal{P}^{(4)}_{2k}| = 2 |\mathcal{T}_{2k}|$, and thus the order $d^{k-1}$ terms above cancel, yielding \be \mu_{\mathcal{W}_d}(2k) \ = \ |\mathcal{P}^{(2)}_{2k}| \frac{d^k}{4^k (d^k-d)} + O\left(\frac{d^{k-2}}{4^k (d^k-d)}\right). \ee From Lemma \ref{lem:CountingWalks} we have $|\mathcal{P}^{(2)}_{2k}| = \frac1{k+1}\binom{2k}{k} = 4^k c_{2k}$ (where $c_{2k}$ is the $2k$\textsuperscript{th} moment of the semi-circle distribution normalized to have variance 1/4), and thus \be \mu_{\mathcal{W}_d}(2k) \ = \ c_{2k} + O\left(\frac{c_{2k}}{d^{k-1}}\right) + O\left(\frac{1}{4^k d^2}\right); \ee as $k > 3$ the second error term dominates. We conclude that the error is $O(1/d^2)$, as claimed.
\end{proof}

\ \\

\end{document}